\newenvironment{proof}{\noindent {\bf Proof:}}{\hfill $\Box$}
\newtheorem{theorem}{Theorem}
\newtheorem{lemma}{Lemma}
\newtheorem{corollary}{Corollary}
\newtheorem{problem}{Problem}
\def\R{\mathbb{R}}
\def\P{\mathbf{P}}
\def\G{\mathbf{G}}
\def\K{\mathbf{K}}
\def\U{\mathbf{U}}
\def\V{\mathbf{V}}
\def\N{\mathbb{N}}
\def\B{\mathbf{B}}
\def\s{\mathbb{S}}
\def\A{\mathbf{A}}
\def\lm{\lambda_{{\rm min}}}
\def\vol{\mathrm{vol}}
\title{\bf Inner approximations for polynomial matrix inequalities
and robust stability regions}
\begin{document}

\author{Didier Henrion$^{1,2,3}$, Jean-Bernard Lasserre$^{1,2,4}$}

\footnotetext[1]{CNRS; LAAS; 7 avenue du colonel Roche, F-31077 Toulouse; France. {\tt henrion@laas.fr}}
\footnotetext[2]{Universit\'e de Toulouse; UPS, INSA, INP, ISAE; UT1, UTM, LAAS; F-31077 Toulouse; France}
\footnotetext[3]{Faculty of Electrical Engineering, Czech Technical University in Prague,
Technick\'a 2, CZ-16626 Prague, Czech Republic}
\footnotetext[4]{Institut de Math\'ematiques de Toulouse, Universit\'e de Toulouse; UPS; F-31062 Toulouse, France.}

\maketitle

\begin{abstract}
Following a polynomial approach, many robust fixed-order controller design problems can be formulated as
optimization problems whose set of feasible solutions is modelled by parametrized polynomial matrix inequalities
(PMI). These feasibility sets are typically nonconvex. Given a parametrized PMI set, we provide
a hierarchy of linear matrix inequality (LMI) problems whose optimal solutions generate
inner approximations modelled by a single polynomial superlevel set. Those inner
approximations converge in a well-defined analytic sense to the nonconvex original feasible set,
with asymptotically vanishing conservatism.
One may also impose the hierarchy of inner approximations to be nested or
convex. In the latter case they do not converge any more to the feasible set, but they can be used in
a convex optimization framework at the price of some conservatism.
Finally, we show that the specific geometry of nonconvex polynomial stability regions
can be exploited to improve convergence of the hierarchy of inner approximations.
\end{abstract}

\begin{center}
\small
{\bf Keywords}: polynomial matrix inequality, linear matrix inequality,
robust optimization, robust fixed-order controller design,
moments, positive polynomials.
\end{center}

\section{Introduction}

Linear system stability can be formulated semialgebraically in the space
of coefficients of the characteristic polynomial. The region of stability
is generally {\it nonconvex} in this space, and this is a major
obstacle when solving fixed-order and/or robust controller design problems.
Using the Hermite stability criterion, these problems can be formulated
as parametrized polynomial matrix inequalities (PMIs) where
parameters account for uncertainties and the decision variables
are controller coefficients. Recent results on real algebraic geometry
and generalized problems of moments can be used to build up a hierarchy
of convex linear matrix inequality (LMI) {\it outer} approximations of the region
of stability, with asymptotic convergence to its convex hull,
see e.g. \cite{hl04} for a software implementation and examples,
and see \cite{hl06} for an application to PMI problems arising
from static output feedback design.

If outer approximations of nonconvex semialgebraic sets can be readily
constructed with these LMI relaxations, {\it inner} approximations are much
harder to obtain. However, for controller design purposes, inner approximations
are essential since they correspond to sufficient conditions
and hence guarantees of stability or robust stability.
In the robust systems control literature,
convex inner approximations of the stability region have been proposed
in the form of polytopes \cite{n06}, ellipsoids \cite{hpas03}
or more general LMI regions \cite{hsk03,hkl06} derived from
polynomial positivity conditions. Interval analysis can also
be used in this context, see e.g. \cite{wj94}.

In this paper we provide a numerical scheme for approximating from inside
the feasible set $\P \subset \R^n$ of a parametrized PMI $P(x,u)\succeq0$ (for some matrix polynomial $P$), that is, the set of points $x$ such that 
$P(x,u)\succeq0$ for {\it all} values of the parameter
$u$ in some specified domain $\U\subset\R^p$ (assumed to be a basic compact semialgebraic set\footnote{A basic semialgebraic set is a set defined
by intersecting a finite number of polynomial superlevel sets.}). 
This includes as a special case the approximation of
the stability region (and the robust stability region) of linear systems. 
The particular case where $P(x,u)$ is affine in $x$ covers parametrized LMIs
with many applications in robust control, as surveyed e.g. in \cite{s06}.

Given a compact set $\B \subset \R^n$ containing $\P$,
this numerical scheme consists of building up a sequence of inner approximations $\G_d\subset\P\subset\B$, $d\in\N$,
which fulfils two essential conditions:
\begin{enumerate}
\item The approximation converges in a {\it well-defined analytic} sense ;
\item Each set $\G_d$ is defined
in a {\it simple} manner, as a superlevel set of a single polynomial. In our mind, this feature is essential
for a successful implementation in practical applications.
\end{enumerate}

More precisely, we provide a hierarchy of inner approximations $(\G_d)$
of $\P$, where each $\G_d = \{x\in\B:g_d(x)\geq0\}$ is a basic semi-algebraic set for some
polynomial $g_d$ of degree $d$. The vector of coefficients of the polynomial $g_d$
is an optimal solution of an LMI problem. When $d$ increases, the convergence of $(\G_d)$
to $\P$ is very strong. Indeed, the Lebesgue volume of $\G_d$ converges to the Lebesgue 
volume of $\P$. In fact, on any (a priori fixed) compact set $\B$,
the sequence $(g_d)$ converges for the $L_1$-norm on $\B$ to the function 
$x\mapsto \lm(x)=\min_{u\in \U} \lm(x,u)$ where $\lm(x,u)$ is the minimum eigenvalue
of the matrix-polynomial $P(x,u)$ associated with the PMI.
Consequently, $g_d\to \lm$ in (Lebesgue) measure on $\B$,
 and $g_{d_k}\to\lm$ almost everywhere and almost uniformly on $\B$,
for a subsequence $(g_{d_k})$. In addition, if one defines the piecewise polynomial
$\bar{g}_d:=\max_{k\leq d}g_k$, then $\bar{g}_d\to\lm$ almost everywhere, almost uniformly and in (Lebesgue) measure on $\B$.

In addition, we can easily enforce that the inner approximations $(\G_d)$ are nested and/or
convex. Of course, for the latter convex approximations, convergence to $\P$ 
is lost if $\P$ is not convex.
However, on the other hand, having a convex inner approximation of $\P$ may reveal to be very useful, e.g.,  for
optimization purposes.

On the practical and computational sides, the quality of the approximation of $\P$ depends heavily on the 
chosen set $\B\supset \P$ on which to make the approximation of the function
$\lm$. The smaller $\B$, the better the approximation. In particular,
it is worth emphasizing that when the set
$\P$ to approximate is the stability or robust stability region of a linear system, then its particular
geometry can be exploited to construct a tight bounding set $\B$. Therefore,
a good approximation of $\P$ is obtained significantly faster than with an arbitrary set $\B$ containing $\P$.

Finally, let us insist that the main goal of the paper is to show
that it is possible to provide a tight and explicit inner approximation with no quantifier,
of nonconvex feasible sets described with quantifiers. Then this new feasible set can 
be used for optimization purposes and we are facing two cases:
\begin{itemize}
\item the convex case: if $f$ and $-g$ are convex polynomials, $\B = \{x \in \R^n : \|x\|_{\infty} \leq1\}$
and $\G = \{x \in \B : g(x) \geq 0\}$ then the optimization problem $\min_x f(x) \,\mathrm{s.t.}\, x \in \G$
is polynomially solvable. Indeed, functions $f(x)$, $g(x)$, $\|x\|_{\infty}$ are polynomially computable,
of polynomial growth, and the feasible set is polynomially bounded. Then polynomial solvability of the
problem follows from \cite[Theorem 5.3.1]{bn01}.
\item the nonconvex case: if $-g$ is not convex then notice that firstly we still have an optimization problem
with no quantifier, a nontrivial improvement. Secondly we are now faced with an
polynomial optimization problem with a single polynomial constraint and possibly bound constraints
$x \in \B$. One may then apply the hierarchy of convex LMI relaxations described in
\cite[Chapter 5]{l09}. Of course, in general, polynomial optimization is NP-hard. However, if the size of
the problem is relatively small and the degree of $g$ is small, practice seems to reveal that the problem
is solved exactly with few relaxations in many cases, see \cite[\S 5.3.3]{l09}. In addition, if some
structured sparsity in the data is present then one may even solve problems of potentially large size by using
an appropriate sparse version of these LMI relaxations as described in \cite{wkkm06}, see also \cite[\S 4.6]{l09}.
\end{itemize}

The outline of the paper is as follows. In Section \ref{problem}
we formally state the problem to be solved. In Section \ref{lmi}
we describe our hierarchy of inner approximations. In Section
\ref{control}, we show that the specific geometry of the
stability region can be exploited, as illustrated on several
standard problems of robust control. The final section
collects technical results and the proofs.

\section{Problem statement}\label{problem}

Let $\R[x]$ denote the ring or real polynomials in the variables $x=(x_1,\ldots,x_n)$, and 
let $\R[x]_d$ be the vector space of real polynomials of degree at most $d$.
Similarly, let $\Sigma[x]\subset\R[x]$ denote the convex cone of real polynomials that are sums of squares (SOS) of polynomials,
and $\Sigma[x]_d\subset\Sigma[x]$ its subcone of SOS polynomials of degree at most $2d$.
Denote by $\s^m$ the space of $m\times m$ real symmetric matrices. For a given matrix $A\in\s^m$, the notation
$A \succeq 0$ means that $A$ is positive semidefinite, i.e., all its eigenvalues 
are real and nonnegative.  

Let  $P:\R[x,u]\to\s^m$ be a matrix polynomial, i.e. a matrix whose entries are
scalar multivariate polynomials of the vector indeterminates $x$ and $u$. Then
\begin{equation}\label{setp}
{\mathbf P} := \{x \in {\mathbb R}^n \: :\: \forall u \in {\mathbf U}, \: P(x,u) \succeq 0\}
\end{equation}
defines a parametrized polynomial matrix inequality (PMI) set, where
$x \in {\mathbb R}^n$ is a vector of decision variables, $u \in {\mathbb R}^p$
is a vector of uncertain parameters belonging to a compact
semialgebraic set
\begin{equation}\label{setu}
{\mathbf U} := \{u \in {\mathbb R}^p \: :\: a_i(u) \geq 0, \: i=1,\ldots,n_a\}
\end{equation}
described by given polynomials $a_i(u) \in {\mathbb R}[u]$, and $P(x,u)$ is a given symmetric
polynomial matrix of size $m$. 
As $\U$ is compact, without loss of generality we assume that 
for some $i=i^*$, $a_{i^*}(u) = R^2-u^Tu$, where $R$ is sufficiently large.

We also assume that $\mathbf P$ is bounded and that
we are given a compact set $\mathbf B \supset \mathbf P$
with explicitly known moments $y=(y_\alpha)$, $\alpha\in\N^n$, of the Lebesgue measure on $\B$, i.e.
\begin{equation}\label{momb}
y_{\alpha} := \int_{\mathbf B} x^{\alpha} dx
\end{equation}
where $x^{\alpha} := \prod_{i=1}^n x^{\alpha_i}_i$.
Typical choices for $\mathbf B$ are a box or a ball.
To fix ideas, let
\[\B\,:=\,\{x\in\R^n\::\: b_j(x)\geq 0,\:j=1,\ldots,n_b\}\]
for some polynomials $b_j\in\R[x]$.
Again, with no loss of generality, we may and will assume that 
for some $j=j^*$, $b_{j^*}(x) = R^2-x^Tx$, where $R$ is sufficiently large.
Finally, denote by $\vol\,\A$ the 
Lebesgue volume of any Borel set $\A\subset\B$.

We are now ready to state our polynomial inner approximation problem.

\begin{problem}
{\bf (Inner Approximations)}\label{inner}
Given set $\mathbf P$, build 
up a sequence of basic closed semialgebraic sets ${\mathbf G}_d=\{x\in\B\,:\,g_d(x)\geq0\}$, for some 
$g_d\in\R[x]$, such that
\[{\mathbf G}_d \subseteq \mathbf P, \quad d=1,2,\ldots
\quad\mbox{and}\quad \lim_{d \rightarrow \infty} \vol\,{\mathbf G}_d\,=\,\vol\,\P. 
\]
\end{problem}

In addition, we may want the sequence of inner approximations to satisfy
additional nesting or convexity conditions.

\begin{problem}{\bf (Nested Inner Approximations)}\label{nest}
Solve Problem \ref{inner} with the additional constraint
\[\G_{d}\, \subseteq \,\G_{d+1}\,\subseteq\,\P, \quad d=1,2,\ldots\]
\end{problem}

\begin{problem}{\bf (Convex Inner Approximations)}\label{convex}
Given set $\mathbf P$, build up a sequence of nested basic closed convex semialgebraic sets ${\mathbf G}_d=\{x\in\B\,:\,g_d(x)\geq0\}$,
 for some $g_d\in\R[x]$, such that
 \[\G_{d}\, \subseteq \,\G_{d+1}\,\subseteq\,\P, \quad d=1,2,\ldots\]

\end{problem}

\section{A hierarchy of semialgebraic inner approximations}\label{lmi}

Given a polynomial matrix $P(x,u)$ which defines the set $\mathbf P$ in (\ref{setp}),
polynomials $a_i\in\R[u]$ which define the uncertain set $\mathbf U$ in (\ref{setu}),
let ${\mathbf V} = \{v \in {\mathbb R}^m \: :\: v^Tv = 1\}$ denote the
Euclidean unit sphere of $\R^m$ and let $\lm:\B\to\R$ be the function:
\begin{equation}\label{funp}
x\mapsto \lm(x) = \min_{u \in \U} \min_{v \in \V} v^T P(x,u) v
\end{equation}
as the robust minimum eigenvalue function of $P(x,u)$.
Function $\lm$ is continuous but not
necessarily differentiable.
It allows to define set $\mathbf P$
alternatively as the superlevel set
\[
{\mathbf P} = \{x \in {\mathbb R}^n \: :\: \lm(x) \geq 0\}.
\]

\subsection{Primal SOS SDP problems}

Let $a_0\in\R[u]$ be the constant polynomial $1$.
Let $2d_0\geq \max(2+{\rm deg}\,P,\max_i {\rm deg} a_i, \max_j {\rm deg} b_j)$, and
consider the hierarchy of convex optimization problems
indexed by the parameter $d\in\N$, $d\geq d_0$:
\begin{equation}\label{sdp}
\begin{array}{rl}
\rho_d\,=\,\displaystyle\int_\B\lm(x)\,dx\,-\,\min_{g,r,s,t} & \displaystyle\int_\B g(x)\,dx\\[1em]
\mathrm{s.t.} & v^T P(x,u) v - g(x) \,=\, r(x,u,v) (1-v^T v) \\
& +\displaystyle\sum_{i=0}^{n_a} s_i(x,u,v) a_i(u)+\displaystyle\sum_{j=1}^{n_b} t_j(x,u,v) b_j(x) \quad\forall (x,u,v)\\
\end{array}
\end{equation}
where decision variables are coefficients of
polynomials $g\in\R[x]_{2d}$, $r\in\R[x,u,v]_{2d_r}$ and coefficients of
SOS polynomials $s_i\in\Sigma[x,u,v]_{d_{s_i}}$, $i=0,1,\ldots,n_a$,
and $t_j\in\Sigma[x,u,v]_{d_{t_j}}$, $j=1,\ldots,n_b$.
Note in particular that the degrees of the polynomials
should be such that $d_r \geq d-1$, $d_{s_i} \geq d-\lceil ({\rm deg} \,a_i)/2\rceil$
and $d_{t_j} \geq d-\lceil ({\rm deg} \,b_j)/2\rceil$. Since higher degree terms
may cancel, the degrees can be chosen strictly greater than these lower bounds.
However, in the experiments described later on in the paper,
we systematically chose the lowest possible degrees.

For each $d\in\N$ fixed, the associated optimization problem (\ref{sdp}) is a semidefinite programming (SDP) problem.
Indeed, stating that the two polynomials in both sides of the equation in (\ref{sdp}) are identical
translates into linear equalities between the coefficients of polynomials $g, r, (s_i), (t_j)$ and stating that some of them
are SOS translates into semidefiniteness of appropriate symmetric matrices. For more details, the interested reader is referred to e.g.
\cite[Chapter 2]{l09}.

\subsection{Dual moment SDP problems}

To define the dual to SDP problem (\ref{sdp}) we must introduce some notations.

With a sequence $y=(y_\alpha)$, $\alpha\in\N^n$,
let $L_y:\R[x]\to\R$ be the linear functional
\[f\quad (=\sum_{\alpha}f_{\alpha}\,x^\alpha)\quad\mapsto\quad
L_y(f)\,=\,\sum_{\alpha}f_{\alpha}\,y_{\alpha},\quad f\in\R[x].\]
With $d\in\N$, the moment matrix of order $d$ associated with $y$
is the real symmetric matrix $M_d(y)$ with rows and columns indexed 
in $\N^n_d$, and defined by
\begin{equation}
\label{moment}
M_d(y)(\alpha,\beta)\,:=\,L_y(x^{\alpha+\beta})\,=\,y_{\alpha+\beta},\qquad\forall\alpha,\beta\in\N^n_d.
\end{equation}
A sequence $y=(y_\alpha)$ has a representing measure if there exists a finite Borel measure $\mu$ on $\R^n$, such that
$y_\alpha=\int x^\alpha d\mu$ for every $\alpha\in\N^n$.

With $y$ as above and $h\in\R[x]$,  the localizing matrix
of order $d$ associated with $y$ and $h$ is the real symmetric matrix $M_d(h\,y)$ with rows and columns indexed by $\N^n_d$, and whose entry $(\alpha,\beta)$ is given by
\begin{equation}
\label{local}
M_d(y)(h\,y)(\alpha,\beta)\,:=\,L_y(h(x)\,x^{\alpha+\beta})\,=\,
\sum_{\gamma}h_\gamma \,y_{\alpha+\beta+\gamma},\quad\forall\alpha,\beta\in\N^n_d.
\end{equation}

With these notations, the dual to SDP problem (\ref{sdp}) is given by:
\begin{equation}
\label{sdp*}
\begin{array}{rl}
\rho^*_d=\displaystyle\int_\B\lm(x)dx\,-\,\min_y &L_y(v^TP(x,u)v)\\
\mbox{s.t.}&M_d(y)\succeq0,\:M_{d-1}((1-v^Tv)\,y)=0\\
&M_{d-d_{a_i}}(a_i\,y)\succeq0,\quad i=0,1,\ldots,n_a\\
&M_{d-d_{b_j}}(b_j\,y)\succeq0,\quad j=1,\ldots,n_b\\
&L_y(x^\alpha)\,=\,\int_\B x^\alpha\,dx,\quad\forall\alpha\in\N^n_{2d}
\end{array}\end{equation}
where $y\in\N^{n+p+m}_{2d}$. 

\subsection{Convergence}

Before stating our main results, let us recall some standard notions of functional analysis.
Let $g : \B \to \R$ be a function of $x$, and let $(g_d)$ denote a sequence of
functions of $x$ indexed by $d \in {\mathbb N}$. 
Lebesgue space $L_1(\B)$ is the Banach space of integrable functions on $\B$ equipped with
the norm
\[
\Vert g\Vert_1=\int_\B \vert g\vert dx.
\]
Regarding sequence $(g_d)$, we use the following notions of convergence in $\B$ when $d\to\infty$:
\begin{itemize}
\item $g_d\to g$ in $L_1$ norm means
$\displaystyle\lim_{d\to\infty} \Vert g-g_d\Vert_1 = 0$;
\item $g_d\to g$ in Lebesgue measure means
that for every $\varepsilon>0$,
\[
\lim_{d\to\infty}\mathrm{vol}\{x : |g(x)-g_d(x)|\geq\varepsilon\}=0;
\]
\item $g_d\to g$ almost everywhere
means that $\lim_{d\to\infty}g_d(x)=g(x)$ pointwise
except possibly for $x \in \A \subset \B$ with $\mathrm{vol}\,\A = 0$;
\item $g_d\to g$ almost uniformly
means that for every given $\varepsilon>0$, there is a set $\A \subset \B$ such that 
$\mathrm{vol}\,\A < \varepsilon$ and $g_d\to g$ uniformly on $\B\setminus\A$;
\item finally, with the notation $g_d\uparrow g$ we mean that $g_d\to g$
while satisfying $g_d(x)\leq g_{d+1}(x)$ for all $d$.
\end{itemize}
For more details on these related notions of convergence, see \cite[\S 2.5]{ash}.

\begin{lemma}
\label{lemma1}
For every $d\geq d_0$, SDP problem (\ref{sdp}) has an optimal solution $g_d\in\R[x]_{2d}$ and 
\begin{equation}\label{lem1-1}
\rho_d\,=\,\int_\B(\lm(x)-g_d(x))\,dx\,=\,\Vert \lm-g_d\Vert_1.
\end{equation}
\end{lemma}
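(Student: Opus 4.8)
The plan is to prove three things in turn: that the feasible set of (\ref{sdp}) is nonempty, that every feasible $g$ satisfies $g\le\lm$ pointwise on $\B$, and that the supremum of $\int_\B g$ is attained. The middle fact is what makes the $L_1$ interpretation automatic, while attainment is the genuinely delicate point.

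First I would record the pointwise bound. Fix any feasible $(g,r,(s_i),(t_j))$ and evaluate the polynomial identity in (\ref{sdp}) at a point $(x,u,v)$ with $x\in\B$, $u\in\U$ and $v\in\V$. Then $1-v^Tv=0$, each $a_i(u)\ge0$ and each $b_j(x)\ge0$, while $s_i$ and $t_j$ are nonnegative since they are SOS; hence the right-hand side is $\ge0$ and $v^TP(x,u)v\ge g(x)$. Minimizing over $v\in\V$ and $u\in\U$ gives $\lm(x)=\min_{u\in\U}\min_{v\in\V}v^TP(x,u)v\ge g(x)$ for every $x\in\B$. Consequently $\lm-g\ge0$ on $\B$, so that $\int_\B(\lm-g)\,dx=\Vert\lm-g\Vert_1$ and $\int_\B g\,dx\le\int_\B\lm\,dx$, whence $\rho_d\ge0$. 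Once an optimal $g_d$ is shown to exist, (\ref{lem1-1}) follows by taking $g=g_d$ in this chain.

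For nonemptiness I would exhibit a feasible point built from a sufficiently negative constant. The rewriting $v^TP(x,u)v-c=v^T(P(x,u)-cI)v-c\,(1-v^Tv)$ suggests taking $g\equiv c$ and $r\equiv-c$; it then remains to represent $v^T(P(x,u)-cI)v$ as $\sum_i s_i a_i+\sum_j t_j b_j$. Choosing $c<\min_{x\in\B,\,u\in\U}\lm(x,u)$ makes $P(x,u)-cI\succ0$ on the compact set $\{a_i(u)\ge0,\ b_j(x)\ge0\}$, on which the quadratic module in $(x,u)$ is Archimedean thanks to the ball constraints $R^2-u^Tu$ and $R^2-x^Tx$. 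A matrix version of Putinar's Positivstellensatz (see e.g. \cite{l09}) then yields $P(x,u)-cI=S_0+\sum_i S_i a_i+\sum_j T_j b_j$ with SOS matrices $S_i,T_j$; setting $s_i(x,u,v)=v^TS_i v$ and $t_j(x,u,v)=v^TT_j v$ produces genuine SOS polynomials in $(x,u,v)$ and completes the certificate, valid as soon as the multiplier degrees fit, i.e. for all $d\ge d_0$ with $d_0$ taken large enough.

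The main work is attainment. I would take a maximizing sequence $(g^{(k)})$ for $\int_\B g$. Since $g^{(k)}\le M:=\max_\B\lm$ pointwise and $\int_\B g^{(k)}$ stays bounded below along a maximizing sequence, the norm $\Vert g^{(k)}\Vert_1$ is bounded; as $g^{(k)}\in\R[x]_{2d}$ lives in a finite-dimensional space and $\B$ has nonempty interior, the $L_1(\B)$ norm controls the coefficients, so a subsequence converges to some $g_d\in\R[x]_{2d}$ realizing the supremum. The hardest step is then to show that the feasible set projected onto the $g$-coordinate is closed, i.e. that the accompanying multipliers can be taken along a convergent subsequence. Here I would use that, modulo the ideal $(1-v^Tv)$, the variable $v$ ranges over the compact sphere $\V$, so the quadratic module generated by $\{a_i,b_j\}$ in $(x,u,v)$ is Archimedean; the Archimedean property supplies the coercive bound that keeps the Gram matrices of $(s_i),(t_j)$ and the polynomial $r$ bounded once $v^TP(x,u)v-g^{(k)}$ is bounded, so that the truncated quadratic module is closed and $g_d$ is feasible. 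With $g_d$ optimal, the first step yields $\rho_d=\int_\B(\lm-g_d)\,dx=\Vert\lm-g_d\Vert_1$, as claimed.
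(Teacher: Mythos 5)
Your opening step is correct and is exactly the right observation: evaluating the certificate at $(x,u,v)\in\B\times\U\times\V$ kills the term $r(1-v^Tv)$ and leaves only nonnegative terms, so every feasible $g$ satisfies $g\leq\lm$ on $\B$, which yields the second equality in (\ref{lem1-1}) once an optimal $g_d$ exists. Your Putinar-based feasibility certificate (matrix Putinar for $P(x,u)-cI\succ0$ on $\B\times\U$, then $s_i=v^TS_iv$, $t_j=v^TT_jv$) is also sound, but note a first weakness: the degree of that certificate is not controlled, so it only shows feasibility of (\ref{sdp}) for all \emph{sufficiently large} $d$, whereas the lemma asserts an optimal solution for \emph{every} $d\geq d_0$ with $d_0$ fixed explicitly by the degrees of the data. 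The paper gets this stronger order-by-order statement because its argument is dual: at each fixed $d\geq d_0$, Slater's condition for (an equivalent form of) the moment problem (\ref{sdp*}) plus finiteness of its value forces attainment and zero duality gap in the SOS problem, with no explicit feasible point ever constructed.

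The genuine gap is in your attainment step. The claim that ``the Archimedean property supplies the coercive bound that keeps the Gram matrices of $(s_i),(t_j)$ bounded, so that the truncated quadratic module is closed'' is not a theorem: Archimedeanness yields representation results for positive polynomials, but it does not prevent mutual cancellation between the terms of a degree-bounded representation. The standard device for bounding Gram matrices along a sequence of representations is to pair the identity with a linear functional $L_{\hat y}$ coming from a measure supported on the constraint set whose moment and localizing matrices are positive definite; then $\langle G_{s_i^{(k)}},M_{d-d_{a_i}}(a_i\,\hat y)\rangle\leq L_{\hat y}(v^TP(x,u)v-g^{(k)})$ bounds all traces. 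Here no such functional exists on $\R[x,u,v]_{2d}$: any measure supported on $\{(x,u,v): a_i(u)\geq0,\,b_j(x)\geq0,\,1-v^Tv=0\}$ satisfies $\int((1-v^Tv)q)^2d\mu=0$ for every $q$, so all of its moment matrices are singular --- equivalently, the constraint set has empty interior because of the sphere equation, which is precisely why the paper notes Slater's condition cannot hold for (\ref{sdp*}). Overcoming this is the actual content of the paper's proof: it passes to the quotient modulo $J=(1-v^Tv)$, proves the \emph{reduced} matrices $\hat M_d(\hat y)$, $\hat M_{d-d_{a_i}}(a_i\,\hat y)$, $\hat M_{d-d_{b_j}}(b_j\,\hat y)$ of the uniform measure on $\B\times\U\times\V$ are positive definite --- which requires the real-radical property $I(V_\R(J))=J$ and Lemma \ref{lem-ideal} (no nonzero element of the quotient vanishes identically on $\B\times\U\times\V$) --- and then invokes strong conic duality, with finiteness of $\rho^*_d$ supplied by the Lasserre--Netzer moment bounds. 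Your primal closedness route can in principle be repaired, but only by importing exactly this quotient-ring machinery; as written, the decisive step is an unsupported assertion that skips the one point where this problem differs from a textbook truncated-quadratic-module argument. (A minor remark: $r^{(k)}$ never needs a separate bound, since multiplication by $1-v^Tv$ is injective and $r^{(k)}$ is determined by $g^{(k)},s^{(k)},t^{(k)}$; the danger lies entirely in cancellation among the SOS multipliers.)
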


A detailed proof of Lemma \ref{lemma1} can be found in \S \ref{proof-lemma1}. 
In particular we prove that there is no duality gap between SOS SDP problem (\ref{sdp})
and moment SDP problem (\ref{sdp*}), i.e. $\rho_d = \rho^*_d$.

For every $d\geq d_0$, let $\bar{g}_d:\B\to\R$ be the piecewise polynomial
\begin{equation}\label{piecewise}
x\mapsto \bar{g}_d(x):=\max_{d_0\leq k\leq d}\,g_k(x).
\end{equation}
We are now in position to prove our main result. 

\begin{theorem}
\label{thmain}
Let $g_d\in\R[x]_{2d}$ be an optimal solution of SDP problem (\ref{sdp})
and consider the associated sequence $(g_d)\subset L_1(\B)$ for $d\geq d_0$.
Then:

{\rm (a)} $g_d\to\lm$ in $L_1$ norm and in Lebesgue measure;

{\rm (b)} $\bar{g}_d\uparrow \lm$ almost everywhere, almost uniformly
and in Lebesgue measure.
\end{theorem}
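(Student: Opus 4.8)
The plan is to reduce everything to the single claim $\rho_d \to 0$, since by Lemma \ref{lemma1} this is exactly the assertion $\|\lm - g_d\|_1 \to 0$ of part (a). First I would record two structural facts. Evaluating the defining identity of (\ref{sdp}) on the set where $v^Tv=1$, $a_i(u)\geq0$ and $b_j(x)\geq0$, the term $r(x,u,v)(1-v^Tv)$ vanishes while the remaining summands are nonnegative (the $s_i,t_j$ are sums of squares and $a_0=1$); hence $v^TP(x,u)v-g(x)\geq0$ there, so every feasible $g$ satisfies $g(x)\leq\min_{u\in\U}\min_{v\in\V}v^TP(x,u)v=\lm(x)$ for all $x\in\B$. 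Thus $\lm-g_d\geq0$, consistent with $\rho_d=\|\lm-g_d\|_1$. Second, the feasible set of (\ref{sdp}) at level $d$ embeds into the one at level $d+1$ (the polynomial and multiplier degree windows only widen), so $(\rho_d)$ is nonincreasing.

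The heart of the argument is to exhibit, for every $\varepsilon>0$, a feasible point at some level $d(\varepsilon)$ whose $L_1$-gap to $\lm$ is below $\varepsilon$. Since $\lm$ is continuous on the compact set $\B$, Stone--Weierstrass supplies a polynomial $h$ with $\|\lm-h\|_\infty$ arbitrarily small on $\B$; setting $g:=h-\delta$ for a suitable $\delta>0$ makes $\lm-g$ strictly positive on $\B$ while keeping $\int_\B(\lm-g)$ below $\varepsilon$. Consider then $F(x,u,v):=v^TP(x,u)v-g(x)$. On the compact set $\K:=\{(x,u,v):b_j(x)\geq0,\ a_i(u)\geq0,\ v^Tv=1\}$ we have $F\geq\lm-g>0$, so $F$ is strictly positive on $\K$. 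I would now invoke Putinar's Positivstellensatz: the ball constraints $R^2-x^Tx$ among the $b_j$ and $R^2-u^Tu$ among the $a_i$, together with the sphere $v^Tv=1$, make the relevant quadratic module (augmented by the ideal of $1-v^Tv$) Archimedean, so $F=s_0+\sum_i s_ia_i+\sum_j t_jb_j+r(1-v^Tv)$ with $s_i,t_j$ sums of squares and $r$ a polynomial --- precisely the identity of (\ref{sdp}). For $d$ large enough that the degree windows $d_r\geq d-1$, $d_{s_i}$, $d_{t_j}$ contain this certificate, $(g,r,s_i,t_j)$ is feasible; since by Lemma \ref{lemma1} $\rho_d$ is the least value of $\int_\B(\lm-g)$ over feasible $g$, this yields $\rho_{d(\varepsilon)}\leq\int_\B(\lm-g)<\varepsilon$. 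Monotonicity of $(\rho_d)$ then forces $\rho_d\leq\varepsilon$ for all $d\geq d(\varepsilon)$, so $\rho_d\to0$ and $g_d\to\lm$ in $L_1$. Convergence in Lebesgue measure follows from $\vol\{|\lm-g_d|\geq\varepsilon\}\leq\varepsilon^{-1}\|\lm-g_d\|_1$, completing (a).

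For part (b), observe that $\bar g_d=\max_{d_0\leq k\leq d}g_k$ is nondecreasing in $d$ and, since each $g_k\leq\lm$, bounded above by $\lm$. Because $g_d\to\lm$ in $L_1$, a standard extraction yields a subsequence $g_{d_k}\to\lm$ almost everywhere on $\B$; on the full-measure set where this holds, $\bar g_d(x)\geq g_{d_k}(x)$ for all $d\geq d_k$ gives $\liminf_d\bar g_d(x)\geq\lm(x)$ upon letting $k\to\infty$, and combined with $\bar g_d\leq\lm$ we get $\bar g_d(x)\uparrow\lm(x)$ almost everywhere. Since $\vol\,\B<\infty$, Egorov's theorem upgrades this to almost uniform convergence, which in turn implies convergence in Lebesgue measure, finishing (b).

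I expect the decisive step to be the construction in the second paragraph: one must simultaneously approximate $\lm$ from below in $L_1$ by a polynomial $g$ and certify the resulting strict positivity of $F$ on $\K$ through Putinar's theorem. The delicate points are verifying the Archimedean condition --- which is exactly why the problem statement builds the redundant constraints $R^2-u^Tu$ into $\U$ and $R^2-x^Tx$ into $\B$ --- and checking that the prescribed degree bounds in (\ref{sdp}) eventually contain the certificate; the monotonicity of $(\rho_d)$ and the measure-theoretic passages (Markov's inequality, subsequence extraction, Egorov) are then routine.
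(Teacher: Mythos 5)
Your proof is correct and, for part (a), follows essentially the same route as the paper: feasibility of $g$ in (\ref{sdp}) forces $g\leq\lm$ on $\B$ (so $\rho_d=\Vert\lm-g_d\Vert_1\geq 0$), Stone--Weierstrass produces a polynomial strictly below $\lm$ yet $\varepsilon$-close to it uniformly on $\B$, Putinar's Positivstellensatz (Archimedean thanks to the redundant ball constraints $R^2-x^Tx$, $R^2-u^Tu$ and the ideal generated by $1-v^Tv$) certifies feasibility of that polynomial at some finite level $d(\varepsilon)$, persistence of the certificate at all higher levels gives $\rho_d\to0$, and convergence in Lebesgue measure follows from Markov's inequality (the content of the cited result in Ash). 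The differences from the paper are minor. First, the paper opens its proof with an infinite-dimensional linear program (\ref{momp}) over measures on $\B\times\U\times\V$, showing its optimal value equals $\int_\B\lm\,dx$ via the measure concentrated on minimizers $(x,u_x,v_x)$; you omit this step, and indeed it is not logically required for the argument as you (and, in fact, the paper) structure it. Second, in part (b) you identify the pointwise limit of the monotone sequence $\bar g_d$ by extracting an a.e.-convergent subsequence from the $L_1$ convergence of $(g_d)$, whereas the paper lets $\bar g_d\uparrow g^*\leq\lm$ and applies dominated convergence (with $\bar g_{d_0}$ as integrable minorant) to get $\int_\B g^*\,dx=\int_\B\lm\,dx$ and hence $g^*=\lm$ a.e.; both arguments are routine and both conclude with Egorov's theorem and the fact that almost uniform convergence implies convergence in measure.
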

A detailed proof of Theorem \ref{thmain} can be found in \S \ref{proof-thmain}.
It relies on the Stone-Weierstrass theorem, Putinar's Positivstellensatz,
Lebesgue's dominated convergence theorem and Egorov's theorem.

\subsection{Polynomial and piecewise polynomial inner approximations}

\begin{corollary}
\label{coro1}
For every $d\geq d_0$, let $g_d\in\R[x]_{2d}$ be an optimal solution of SDP problem (\ref{sdp}), let
$\bar{g}_d$ be the piecewise polynomial defined in (\ref{piecewise}), and let
\begin{equation}
\label{coro1-0}
\G_d \,:=\, \{x \in \B \: :\: g_d(x) \geq 0\},\quad
\bar{\G}_d\,:=\, \{x \in \B \: :\: \bar{g}_d(x) \geq 0\}.\end{equation}
Then
\begin{eqnarray}
\label{coro1-1}
\G_d\subset\P\quad\forall\,d\geq d_0&\mbox{and}& \lim_{d\to\infty} \vol(\P\setminus \G_d) = 0.\\
\label{coro1-2}
\bar{\G}_{d_0}\subseteq\cdots\subseteq\bar{\G}_d\subseteq\cdots\subset\P
&\mbox{and}& \lim_{d\to\infty} \vol(\P\setminus \bar{\G}_d) = 0.
\end{eqnarray}
That is, sequence $(\G_d)$
solves Problem \ref{inner} and sequence $(\bar{\G}_d)$
solves Problem \ref{nest} if piecewise polynomials are allowed.
\end{corollary}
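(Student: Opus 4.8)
The plan is to derive everything from Theorem \ref{thmain} together with a single pointwise domination inequality extracted from the SDP constraint. First I would establish that $g_d\leq\lm$ everywhere on $\B$, which simultaneously yields the inclusion $\G_d\subseteq\P$. To this end, evaluate the polynomial identity in problem (\ref{sdp}) at an arbitrary triple $(x,u,v)$ with $x\in\B$, $u\in\U$ and $v\in\V$: the term $r(x,u,v)(1-v^Tv)$ vanishes because $v^Tv=1$ on $\V$, each product $s_i(x,u,v)a_i(u)$ is nonnegative because $s_i$ is SOS and $a_i(u)\geq0$ on $\U$ (with $a_0\equiv1$), and likewise each $t_j(x,u,v)b_j(x)\geq0$ because $x\in\B$. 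Hence $v^TP(x,u)v-g_d(x)\geq0$, and minimizing over $(u,v)\in\U\times\V$ gives $\lm(x)\geq g_d(x)$ for all $x\in\B$. In particular $g_d(x)\geq0$ forces $\lm(x)\geq0$, so $\G_d\subseteq\{x\in\B:\lm(x)\geq0\}=\P$, which is the first half of (\ref{coro1-1}).

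Next I would prove the volume convergence $\vol(\P\setminus\G_d)\to0$. Since $g_d\leq\lm$, the difference set is $\P\setminus\G_d=\{x\in\B:\lm(x)\geq0>g_d(x)\}$. Fixing $\varepsilon>0$ and splitting according to whether $\lm(x)<\varepsilon$ or $\lm(x)\geq\varepsilon$,
\[
\vol(\P\setminus\G_d)\ \leq\ \vol\{x\in\B:0\leq\lm(x)<\varepsilon\}\ +\ \vol\{x\in\B:|\lm(x)-g_d(x)|\geq\varepsilon\},
\]
where the second bound uses that $\lm\geq\varepsilon$ and $g_d<0$ force $\lm-g_d>\varepsilon$. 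By Theorem \ref{thmain}(a), $g_d\to\lm$ in Lebesgue measure, so the second term tends to $0$ as $d\to\infty$ for each fixed $\varepsilon$. The first term is independent of $d$ and, as $\varepsilon\downarrow0$, decreases to $\vol\{x\in\B:\lm(x)=0\}$. Choosing $\varepsilon$ small and then $d$ large would complete (\ref{coro1-1}).

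For the nested family (\ref{coro1-2}) I would argue as follows. Each $\bar g_d=\max_{d_0\leq k\leq d}g_k$ is a pointwise maximum of functions dominated by $\lm$, so $\bar g_d\leq\lm$ and therefore $\bar\G_d\subseteq\P$; moreover $\bar g_d\leq\bar g_{d+1}$ pointwise gives the monotone chain $\bar\G_{d_0}\subseteq\cdots\subseteq\bar\G_d\subseteq\cdots$. The volume convergence for $(\bar\G_d)$ then comes for free by a squeeze: since $g_d\leq\bar g_d$ we have $\G_d\subseteq\bar\G_d\subseteq\P$, whence $\vol(\P\setminus\bar\G_d)\leq\vol(\P\setminus\G_d)\to0$ by the first part. (Alternatively one invokes $\bar g_d\uparrow\lm$ almost everywhere from Theorem \ref{thmain}(b) and monotone convergence.) This establishes that $(\G_d)$ solves Problem \ref{inner} and $(\bar\G_d)$ solves Problem \ref{nest}.

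The main obstacle is the residual term $\vol\{x\in\B:\lm(x)=0\}$ appearing in the second paragraph: the whole argument collapses unless this boundary level set is Lebesgue-negligible. Indeed, on any region where $\lm\equiv0$ one has $g_d\leq0$, so such a region lies in $\P$ yet is essentially missed by every $\G_d$, and the volumes cannot match. I would therefore devote the main effort to verifying $\vol\{\lm=0\}=0$. Since $\lm$ is a semialgebraic function (its definition (\ref{funp}) involves only polynomial data and quantifiers over the semialgebraic sets $\U$ and $\V$), its zero set is semialgebraic, hence Lebesgue-null precisely when it has dimension strictly less than $n$, i.e. when $\lm$ vanishes identically on no open ball. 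This is the nondegeneracy condition one must assume or read off from the problem data; under it, both displayed terms are driven to zero and the corollary follows.
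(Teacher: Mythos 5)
Your proposal follows essentially the same route as the paper's own proof: the domination $g_d\leq\lm$ on $\B$ extracted from feasibility in (\ref{sdp}) (the paper uses this too, asserting it inside the proof of Theorem \ref{thmain}), convergence in Lebesgue measure from Theorem \ref{thmain}(a), a split of $\P\setminus\G_d$ according to an $\varepsilon$-level set of $\lm$, and the squeeze $\G_d\subseteq\bar{\G}_d\subseteq\P$ for the nested statement (the paper merely says the proof of (\ref{coro1-2}) is ``similar''). The one place where you diverge is your final paragraph, and the concern you raise there is legitimate rather than a defect of your argument: the paper's proof has exactly the same issue, hidden in the unjustified assertion that $\lim_{\varepsilon\to0}\vol\,\P_\varepsilon=\vol\,\P$ for $\P_\varepsilon:=\{x\in\B:\lm(x)\geq\varepsilon\}$. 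Since $\P_\varepsilon$ increases to $\{x\in\B:\lm(x)>0\}$ as $\varepsilon\downarrow0$, that assertion is valid if and only if $\vol\{x\in\B:\lm(x)=0\}=0$, which is precisely your residual term. The condition can genuinely fail within the paper's stated assumptions: for instance $P(x)=\mathrm{diag}(x_1,\,0,\,1-x^Tx)$ gives $\lm(x)=\min(x_1,0,1-x^Tx)$, so $\lm\equiv0$ on all of $\P$ (a half-ball of positive volume); every feasible $g_d$ then satisfies $g_d\leq0$ on $\P$, hence $\vol\,\G_d=0$ and $\vol(\P\setminus\G_d)=\vol\,\P$ for every $d$, even though Theorem \ref{thmain} still holds (convergence in measure of $g_d$ to $\lm$ does not control superlevel sets at the threshold value where $\lm$ sits exactly at $0$ on a set of positive measure). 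So Corollary \ref{coro1} implicitly assumes the nondegeneracy condition you isolate; it does hold in the paper's intended applications (Hermite matrices of stability regions), where $\lm>0$ on the interior of $\P$, so that $\{\lm=0\}$ is contained in the boundary of $\P$, a semialgebraic set of dimension strictly less than $n$ and hence Lebesgue-null. In short: your proof is correct under the same implicit hypothesis the paper relies on, and making that hypothesis explicit is an improvement in rigor, not a gap relative to the published argument.
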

A proof can be found in \S \ref{proof-coro1}.

\subsection{Nested polynomial inner approximations}

We now consider Problem \ref{nest} where $g_d$ is constrained to be a polynomial instead
of a piecewise polynomial.
We need to slightly modify SDP problem (\ref{sdp}). Suppose that at step $d-1$
in the hierarchy we have already obtained an optimal solution $g_{d-1}\in\R[x]_{2d-2}$, such that
$g_{d-1}\geq g_{d_0}$ on $\B$, for all $d_0\leq d-1$.
At step $d$ we now solve SDP problem (\ref{sdp}) with the additional
constraint
\begin{equation}
\label{add1}
g(x)-g_{d-1}(x)\,=\,c_0(x)+\displaystyle\sum_{j=1}^{n_b} c_j(x)b_j(x),\quad\forall x
\end{equation}
with unknown SOS polynomials $c_0\in\Sigma[x]_d$ and $c_j\in\Sigma[x]_{d-d_{b_j}}$.

\begin{corollary}
\label{coro2}
Let $g_d\in\R[x]_{2d}$ be an optimal solution of SDP problem (\ref{sdp}) 
with the additional constraint (\ref{add1}) and let 
$\G_d$ be as in (\ref{coro1-0}) for $d\geq d_0$. Then the sequence $(\G_d)$
solves Problem \ref{nest}.
\end{corollary}
For a proof see \S \ref{proof-coro2}.

\subsection{Convex nested polynomial inner approximations}

Finally, for $g\in\R[x]_{2d}$, denote by $\nabla^2 g(x)$ the Hessian matrix of $g$ at $x$, and 
consider SDP problem (\ref{sdp}) with the additional constraint 
\begin{equation}
\label{add2}
v^T \nabla^2 g(x) v = c_0(x,v) + \sum_{j=1}^{n_b} c_j(x,v)b_j(x)+c_{n_b+1}(x,v) (1-v^T v),\end{equation}
for some SOS polynomials $c_0\in\Sigma[x,v]_d$, $c_j\in\Sigma[x,v]_{d-d_{b_j}}$
and $c_{n_b+1}\in\Sigma[x,v]_{d-1}$.

\begin{corollary}
Let $g\in\R[x]_{2d}$ be an optimal solution of SDP problem (\ref{sdp}) with the additional constraint (\ref{add2}) and let
$\G_d$ be as in (\ref{coro1-0}) for $d\geq d_0$. Then the sequence 
$(\G_d)$ solves Problem \ref{convex}.
\end{corollary}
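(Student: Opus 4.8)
The plan is to verify the three requirements of Problem~\ref{convex} separately: that each $\G_d$ is an inner approximation ($\G_d\subseteq\P$), that the family is nested ($\G_d\subseteq\G_{d+1}$), and that each $\G_d$ is convex. The first two are inherited from the earlier constructions, while the genuinely new content is the convexity, which is exactly what constraint~(\ref{add2}) is designed to certify. Note that no volume convergence is requested here (and none can hold when $\P$ is nonconvex), so the analytic machinery of Theorem~\ref{thmain} is not needed.

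For the inclusion $\G_d\subseteq\P$ I would simply reuse the argument of Corollary~\ref{coro1}: the defining equality of SDP~(\ref{sdp}) is untouched by the extra constraint, so restricting it to $x\in\B$, $u\in\U$ and $v\in\V$ kills the term in $(1-v^Tv)$ and leaves $v^TP(x,u)v-g(x)$ equal to a nonnegative combination of the $a_i(u)\ge0$ and $b_j(x)\ge0$ weighted by SOS multipliers; hence $g\le\lm$ on $\B$ and $\G_d=\{g\ge0\}\subseteq\{\lm\ge0\}=\P$. For the nesting I would carry along the device of Corollary~\ref{coro2}, i.e. also impose~(\ref{add1}), which writes $g_d-g_{d-1}$ as an SOS combination of the $b_j$ and therefore forces $g_{d-1}\le g_d$ on $\B$, whence $\G_{d-1}\subseteq\G_d$. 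Feasibility of the enlarged SDP propagates along the hierarchy, since $g_d=g_{d-1}$ is always admissible at step $d$ (a constant being trivially concave), so an optimal $g_d$ exists at every level.

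The crux is the convexity of $\G_d$, for which I would exploit~(\ref{add2}). Restricting that polynomial identity to the sphere $v\in\V$ makes the multiple of $(1-v^Tv)$ vanish and, using $b_j(x)\ge0$ on $\B$ together with the SOS property of $c_0$ and the $c_j$, expresses the Hessian form $v\mapsto v^T\nabla^2 g(x)v$ as a sign-definite object on $\B$. For $\G_d$ to be convex this sign must be the one making $g$ concave, i.e. $\nabla^2 g(x)\preceq0$ for every $x\in\B$. Since the standing choice of $\B$ (a box or a ball) is convex, any segment joining two points of $\G_d$ stays in $\B$, and concavity of $g$ along that segment gives $g\ge0$ throughout it; hence $\G_d$ is convex, as required.

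The main obstacle I anticipate is precisely this last passage from the algebraic identity~(\ref{add2}) to the geometric conclusion. Two points must be handled with care: first, the reduction to a definiteness statement for $\nabla^2 g$ on $\B$ only uses the values on the sphere, so the $(1-v^Tv)$ multiplier plays the role of an equality (localizing) term rather than an inequality one; second, concavity of $g$ holds only on $\B$, not globally, so convexity of the bounding set $\B$ is essential, since it is what lets the second-order condition $\nabla^2 g\preceq0$ on $\B$ upgrade to concavity of $g$ along every chord of $\G_d$, and thence to convexity of the superlevel set. Everything else is bookkeeping inherited from Corollaries~\ref{coro1} and~\ref{coro2}.
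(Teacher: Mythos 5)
Your proposal is correct, and it is in fact more complete than the paper's own treatment: the paper's entire proof here is the single sentence ``the proof follows along the same lines as the proof of Corollary~\ref{coro2}'', so your detailed verification (inner inclusion by restricting the identity of (\ref{sdp}) to $\B\times\U\times\V$, nestedness from (\ref{add1}), convexity from the Hessian certificate plus convexity of $\B$) is precisely the expansion the authors leave to the reader. Two of your choices deserve emphasis because they silently repair imprecisions in the statement itself. First, the corollary as written imposes only (\ref{add2}), which by itself cannot yield the nesting $\G_d\subseteq\G_{d+1}$ required by Problem~\ref{convex}; your decision to carry along (\ref{add1}) as well, together with the observation that $g_{d-1}$ and its certificates remain feasible at step $d$ so that the constrained SDP stays solvable, is the right (and evidently intended) construction. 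Second, the sign question you hedge on is a genuine defect of the displayed constraint: as printed, (\ref{add2}) certifies $v^T\nabla^2 g(x)v\geq 0$ on $\B\times\V$, hence $\nabla^2 g\succeq 0$ on $\B$, and superlevel sets of \emph{convex} functions need not be convex; the identity must be imposed on $-v^T\nabla^2 g(x)v$, which is exactly what your requirement $\nabla^2 g(x)\preceq 0$ on $\B$ amounts to. Finally, your caveat that $\B$ itself must be convex (true for the boxes, balls and simplices used in the paper, but not for a general basic semialgebraic $\B$) is also genuinely needed and nowhere stated by the authors: without it, $\nabla^2 g\preceq 0$ on $\B$ does not upgrade to concavity along chords of $\G_d$, and the convexity conclusion would fail.
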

The proof follows along the same lines as the proof of Corollary \ref{coro2}.

\subsection{Example}

Consider the nonconvex planar PMI set
\[
{\mathbf P} = \{x \in {\mathbb R}^2 \: :\:
P(x) = \left[\begin{array}{cc} 1-16x_1x_2 & x_1 \\
x_1 & 1-x_1^2-x_2^2 \end{array}\right] \succeq 0\}
\]
which is Example II-E in \cite{hl06} scaled
to fit within the unit box
\[
{\mathbf B} = \{x \in {\mathbb R}^2 \: :\: \|x\|_{\infty} \leq 1\}
\]
whose moments (\ref{momb}) are readily given by
\[
y_{\alpha} = \frac{4}{(\alpha_1+1)(\alpha_2+1)}.
\]
\begin{figure}[h!]
\centering
\includegraphics[width=0.49\textwidth]{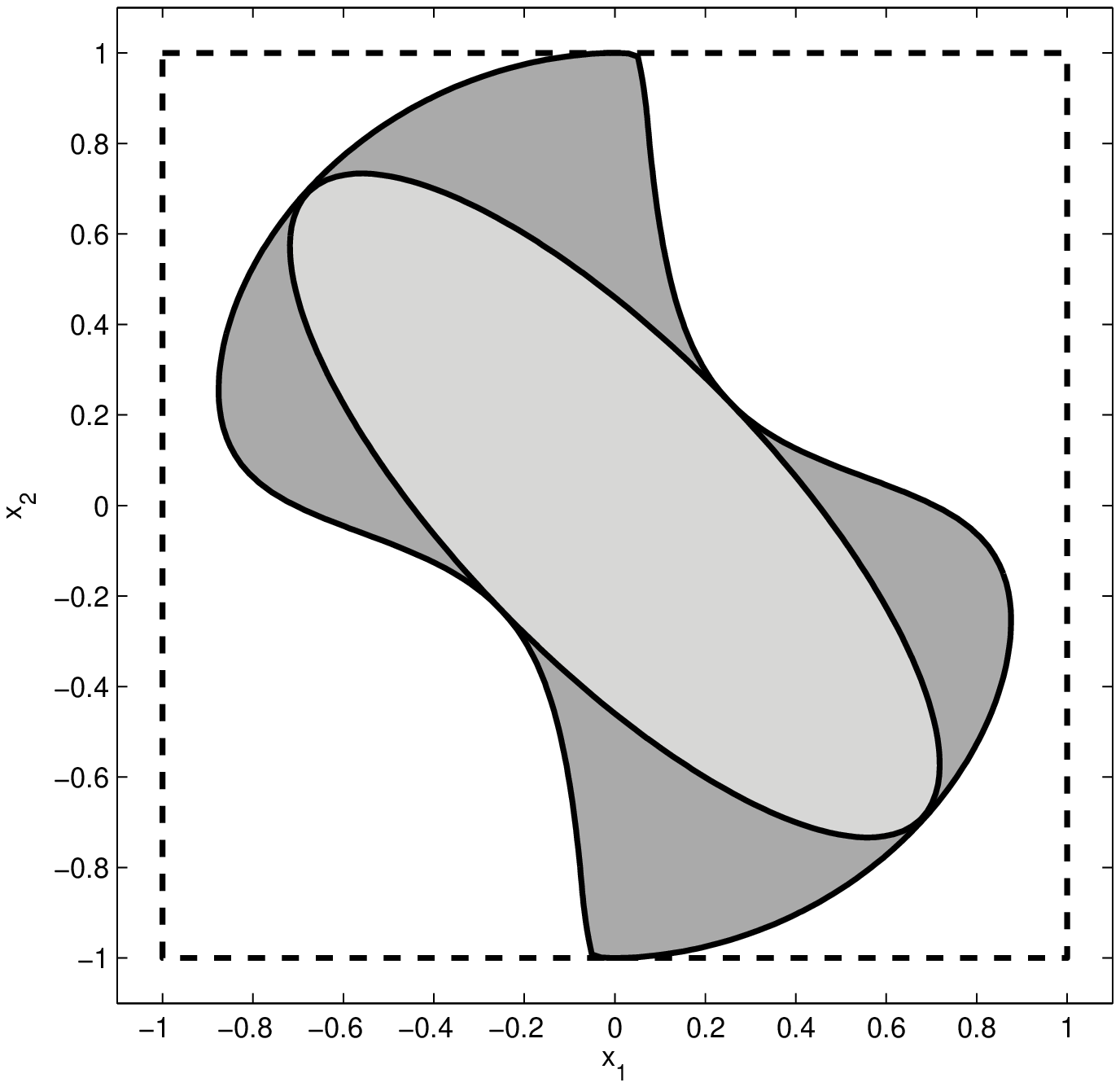}
\includegraphics[width=0.49\textwidth]{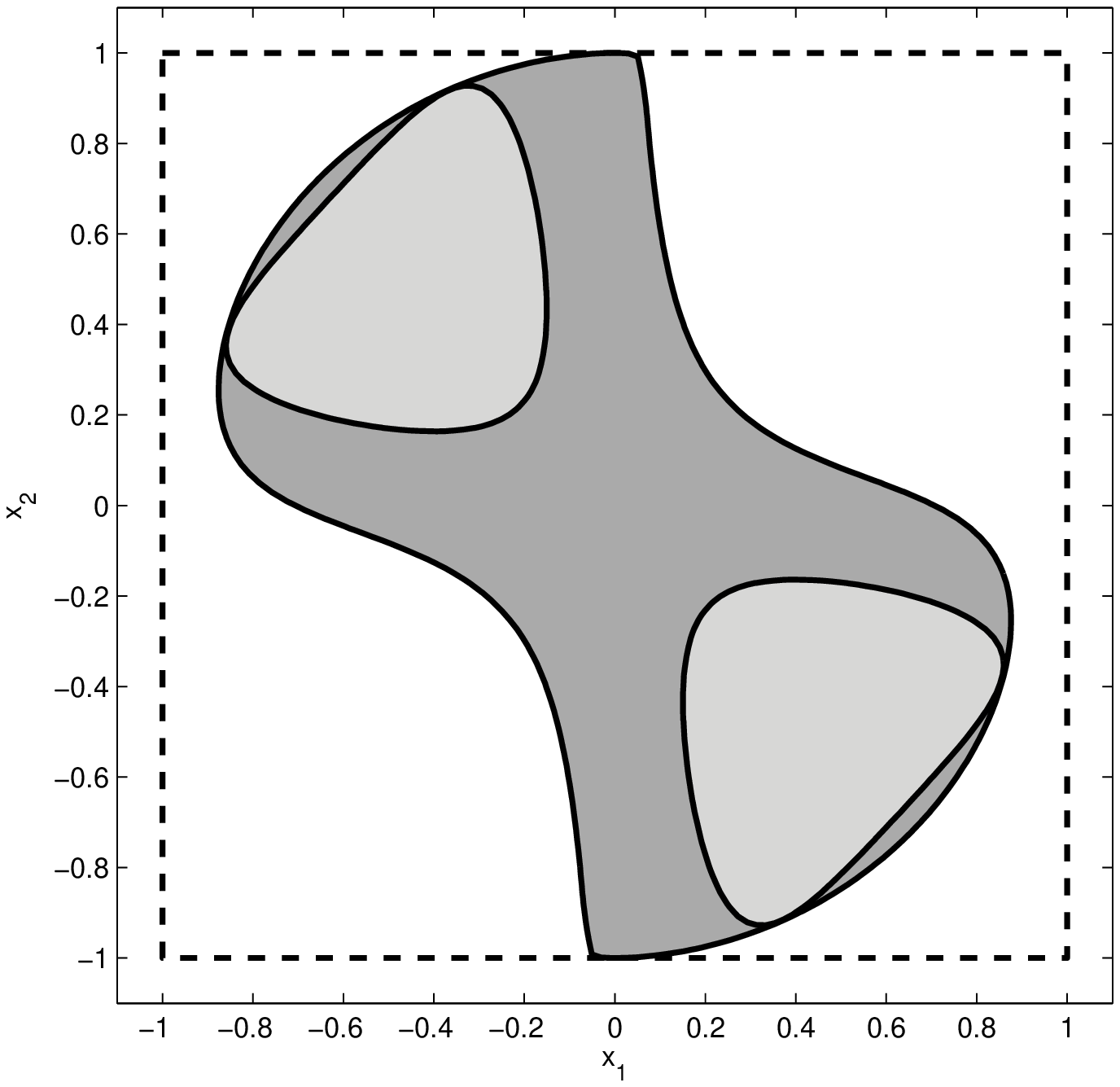}
\caption{Degree two (left) and four (right) inner approximations (light gray)
of PMI set (dark gray) embedded in unit box (dashed).\label{figqmi24}}
\end{figure}

On Figure \ref{figqmi24} we represent the degree two and degree
four solutions to SDP problem (\ref{sdp}), 
modelled by YALMIP 3 and solved by SeDuMi 1.3
under a Matlab environment. We
see in particular that the degree four polynomial superlevel set
${\mathbf G}_2$
is somewhat smaller than expected. This is due to the fact
that the objective function in problem (\ref{sdp})
is the integral of $g(x)$ over the whole box $\mathbf B$,
not only over PMI set $\mathbf P$. There is a significant role
played by the components of the integral on complement
set ${\mathbf B}\backslash{\mathbf P}$, and this deteriorates the
inner approximation. 

This issue can be addressed partly by embedding $\mathbf P$
in a tighter set $\mathbf B$, for example here the unit disk
\[
{\mathbf B} = \{x \in {\mathbb R}^2 \: :\: \|x\|_2 \leq 1\}
\]
whose moments (\ref{momb}) are given by
\[
y_{\alpha} = \frac{\Gamma(\frac{\alpha_1+1}{2})\Gamma(\frac{\alpha_2+1}{2})}
{\Gamma(2+\frac{\alpha_1+\alpha_2}{2})}
\]
where $\Gamma$ is the gamma function such that $\Gamma(k)=(k-1)!$
for integer $k$. See \cite[Theorem 3.1]{lz01} for the general
expression\footnote{Note however that
there is an incorrect factor $2^{-n}$ in the right handside of equation (3.3)
in \cite{lz01}.}
of moments of the unit disk in ${\mathbb R}^n$.
\begin{figure}[h!]
\centering
\includegraphics[width=0.49\textwidth]{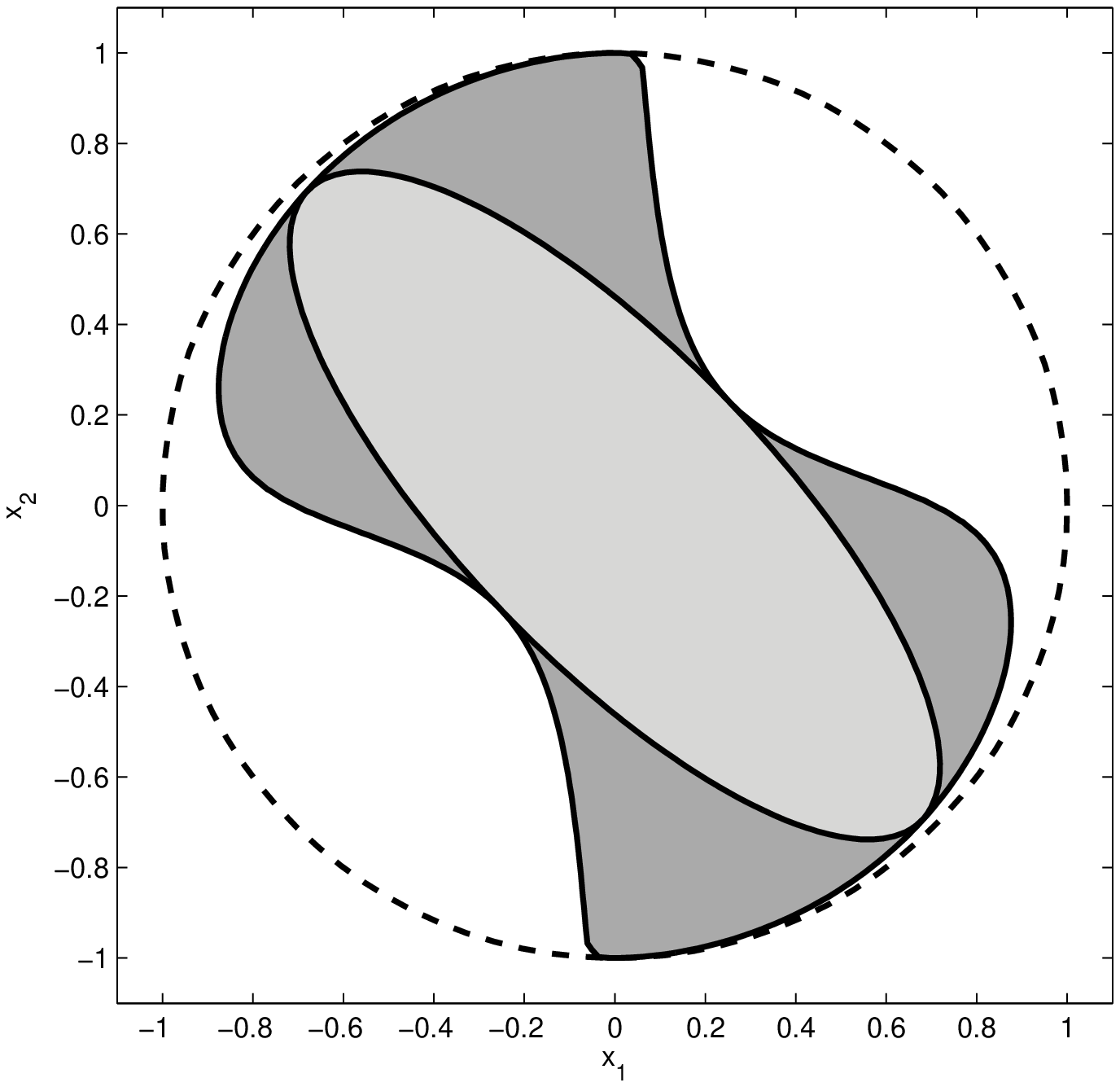}
\includegraphics[width=0.49\textwidth]{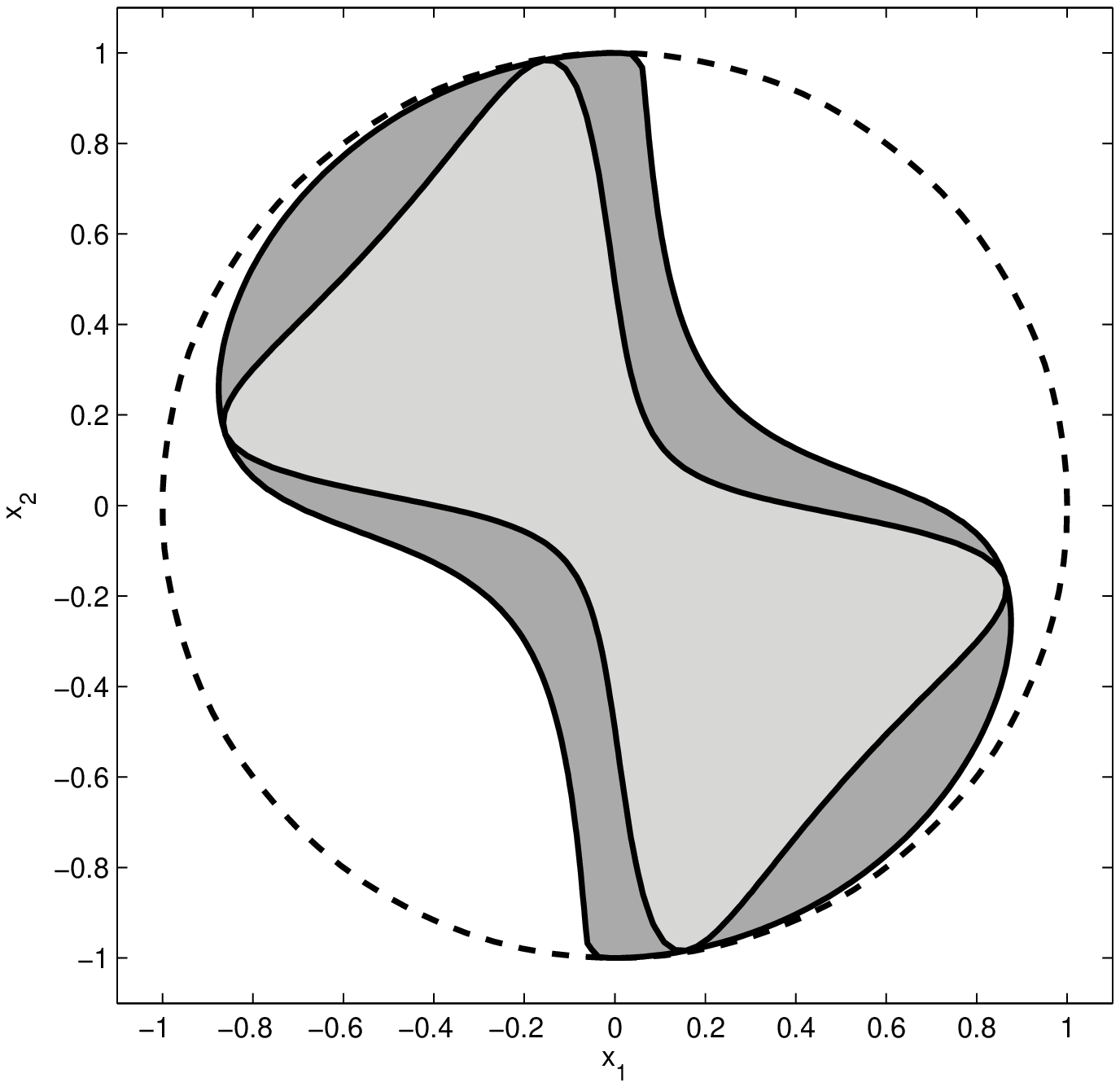}
\caption{Degree two (left) and four (right) inner approximations (light gray)
of PMI set (dark gray) embedded in unit disk (dashed).\label{figqmidisk24}}
\end{figure}
\begin{figure}[h!]
\centering
\includegraphics[width=0.49\textwidth]{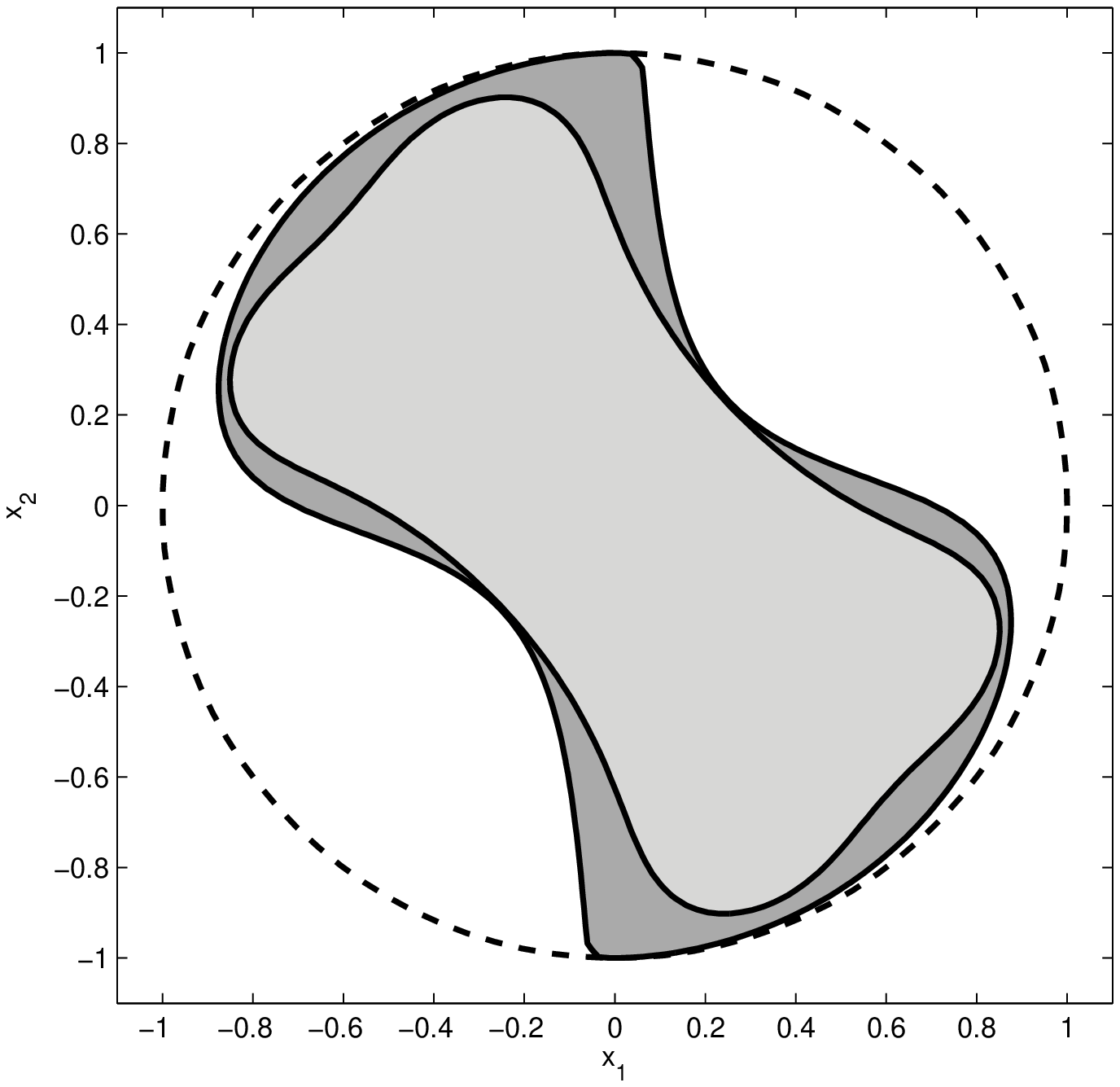}
\includegraphics[width=0.49\textwidth]{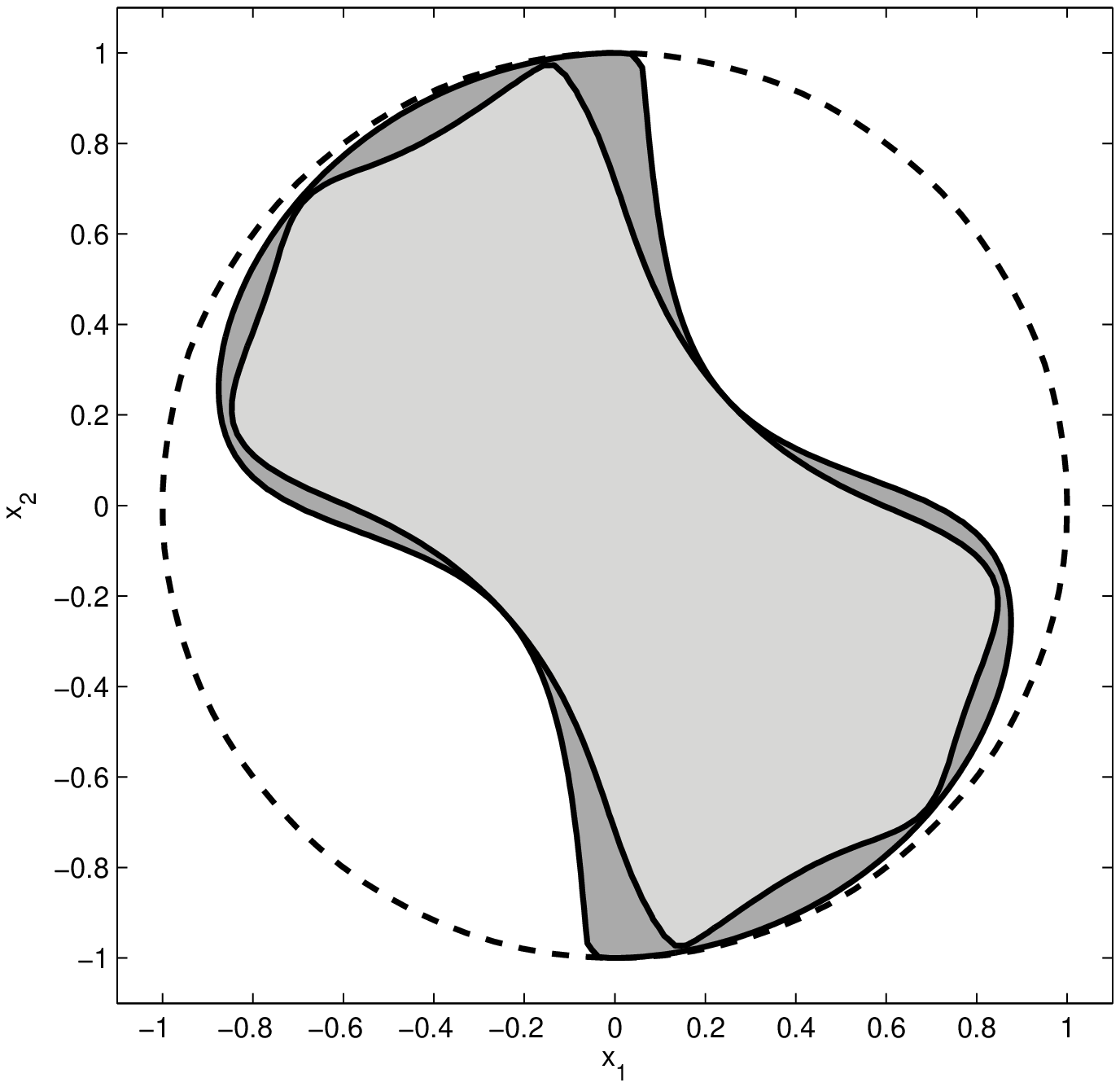}
\caption{Degree six (left) and eight (right) inner approximations (light gray)
of PMI set (dark gray) embedded in unit disk (dashed).\label{figqmidisk68}}
\end{figure}

On Figure \ref{figqmidisk24} we represent the degree two and degree
four solutions to SDP problem (\ref{sdp}). Comparing
with Figure \ref{figqmi24}, we see that
the approximations embedded in the unit disk are much
tighter than the approximations embedded in the unit box.
Finally, on Figure \ref{figqmidisk68} we represent the tighter
degree six and degree eight inner approximations within the unit disk.

\section{Geometry of control problems}\label{control}

As explained in the introduction, inner approximations of the
stability regions are essential for fixed-order controller design.
The PMI regions arising from parametric stability conditions
have a specific geometry that can be exploited to improve the
convergence of the hierarchy of inner approximations.
In this section, we first recall Hermite's PMI formulation
of (discrete-time) stability conditions. Then we recall
that the PMI stability region is the image of a unit box
through a multi-affine mapping, which allows to derive
explicit expressions for the moments of the full-dimensional
stability region, as well as tight polytopic outer approximations
of low-dimensional affine sections of the stability region.
Numerical examples illustrate these techniques for fixed-order
nominal and robustly stabilizing controller design.

\subsection{Hermite's PMI}

Derived by the French mathematician Charles Hermite in 1854,
the Hermite matrix criterion is a symmetric version of
the Routh-Hurwitz criterion for assessing stability of
a polynomial. Originally it was derived for locating the
roots of a polynomial in the open upper half of the complex plane,
but with a fractional transform it can be readily transposed
to the open unit disk and discrete-time stability.
The criterion says that a polynomial $x(z)=z^n+x_1z^{n-1}+\cdots+x_{n-1}z+x_n$
has all its roots in the open unit disk if and only if its Hermite
matrix $P(x)=T^T_1(x)T_1(x)-T^T_2(x)T_2(x)$ is positive definite, where
\[
T_1(x) = \left[\begin{array}{cccc}
1 & x_1 & x_2 \\
0 & 1 & x_1 \\
0 & 0 & 1 \\
& & & \ddots
\end{array}\right] \quad
T_2(x) = \left[\begin{array}{cccc}
x_n & x_{n-1} & x_{n-2} \\
0 & x_n & x_{n-1} \\
0 & 0 & x_n \\
& & & \ddots
\end{array}\right]
\]
are $n$-by-$n$ upper-right triangular Toeplitz matrices, see
e.g. the entrywise formulas of \cite[Theorem 3.13]{barnett}
or the construction explained in \cite{hpas03}.
The Hermite matrix is $n$-by-$n$, symmetric and quadratic in coefficients $x=(x_1,x_2,\ldots,x_n)$,
so that the interior of the PMI set
\[
\P = \{x \in \R^n \: :\: P(x) \succeq 0\}
\]
is the parametric stability domain which is bounded, connected but nonconvex for $n\geq 3$.
Optimal controller design amounts to 
optimizing over semialgebraic set $\P$.

\subsection{Multiaffine mapping of the unit box}\label{multiaffine}

As explained e.g. in \cite{n06} or \cite[\S 3.5]{sd11}
and references therein, stability domain $\P$ can 
also be constructed as the image of the unit box (in the space of so-called reflection
coefficients) through a multiaffine mapping. More explicitly $\P = f(\K)$
where $\K = \{k \in \R^n \: :\: \|k\|_{\infty} \leq 1\}$ and
multiaffine mapping $f : \R^n \rightarrow \R^n$ is defined by
\[
\begin{array}{rcl}
f(k) & = &
\left[\begin{array}{ccccc}0&1&0&0&0\\0&0&1&0&0\\0&0&0&1&0\end{array}\right]
\left[\begin{array}{cccc}1&0&0&k_3\\0&1&k_3&0\\0&k_3&1&0\\k_3&0&0&1\\0&0&0&0\end{array}\right]
\left[\begin{array}{ccc}1&0&k_2\\0&1+k_2&0\\k_2&0&1\\0&0&0\end{array}\right]
\left[\begin{array}{cc}1&k_1\\k_1&1\\0&0\\\end{array}\right]
\left[\begin{array}{c}1\\0\end{array}\right] \\
& = &
\left[\begin{array}{c}k_2k_3+k_1(1+k_2)\\k_2+k_1k_3(1+k_2)\\k_3\end{array}\right]
\end{array}
\]
in the case $n=3$. The general expression of $f$ for other values of $n$ is not
given here for space reasons, but it follows
readily from the construction outlined above.

Using this mapping we can obtain moments (\ref{momb}) of $\B=\P$
analytically:
\begin{equation}\label{mom3}
y_{\alpha} = \int_{\mathbf P} x^{\alpha} dx = \int_{\mathbf K} 
(k_2k_3+k_1(1+k_2))^{\alpha_1} 
(k_2+k_1k_3(1+k_2))^{\alpha_2} 
k_3^{\alpha_3}
\det \nabla f(k) dk
\end{equation}
where $\det \nabla f(k) = (1+k_2)(1-k_3^2)$ is
the determinant of the Jacobian of $f$, in the case $n=3$.
For space reasons we do not give
here the explicit value of $y_{\alpha}$ as a function of $\alpha$, but
it can be obtained by integration by parts. 

Finally, let us mention a
well-known geometric property of $\P$: its convex hull
is a polytope whose vertices correspond to the $n+1$ polynomials
with roots equal to $-1$ or $+1$. For example, when $n=3$, we have
\begin{equation}\label{conv3}
\mathrm{conv}\,\P = \mathrm{conv}\{(-3,3,-1),\,(-1,-1,1),\,(1,-1,-1),\,(3,3,1)\}.
\end{equation}

\subsection{Third degree stability region}

\begin{figure}[h!]
\centering
\includegraphics[width=0.49\textwidth]{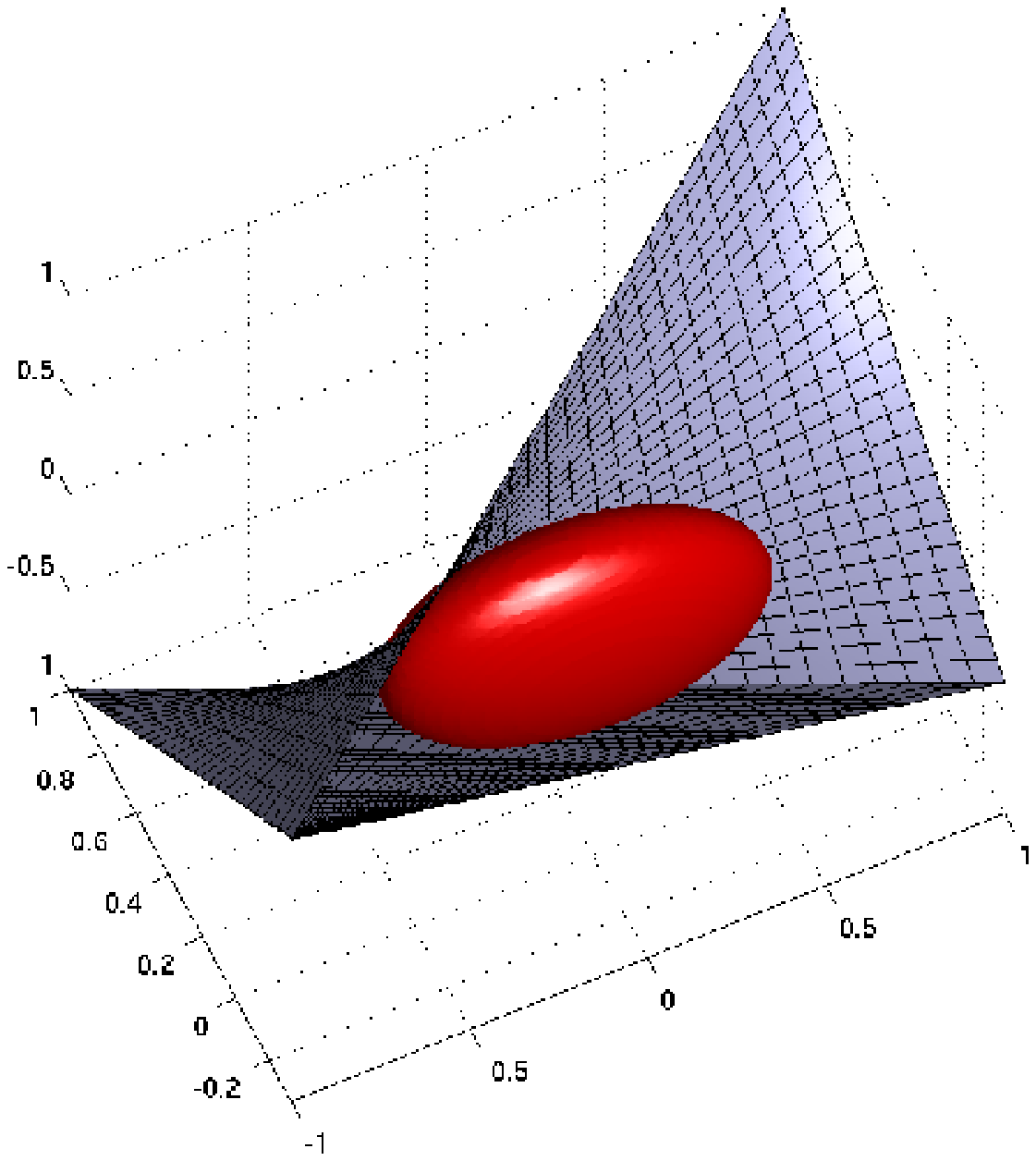}
\includegraphics[width=0.49\textwidth]{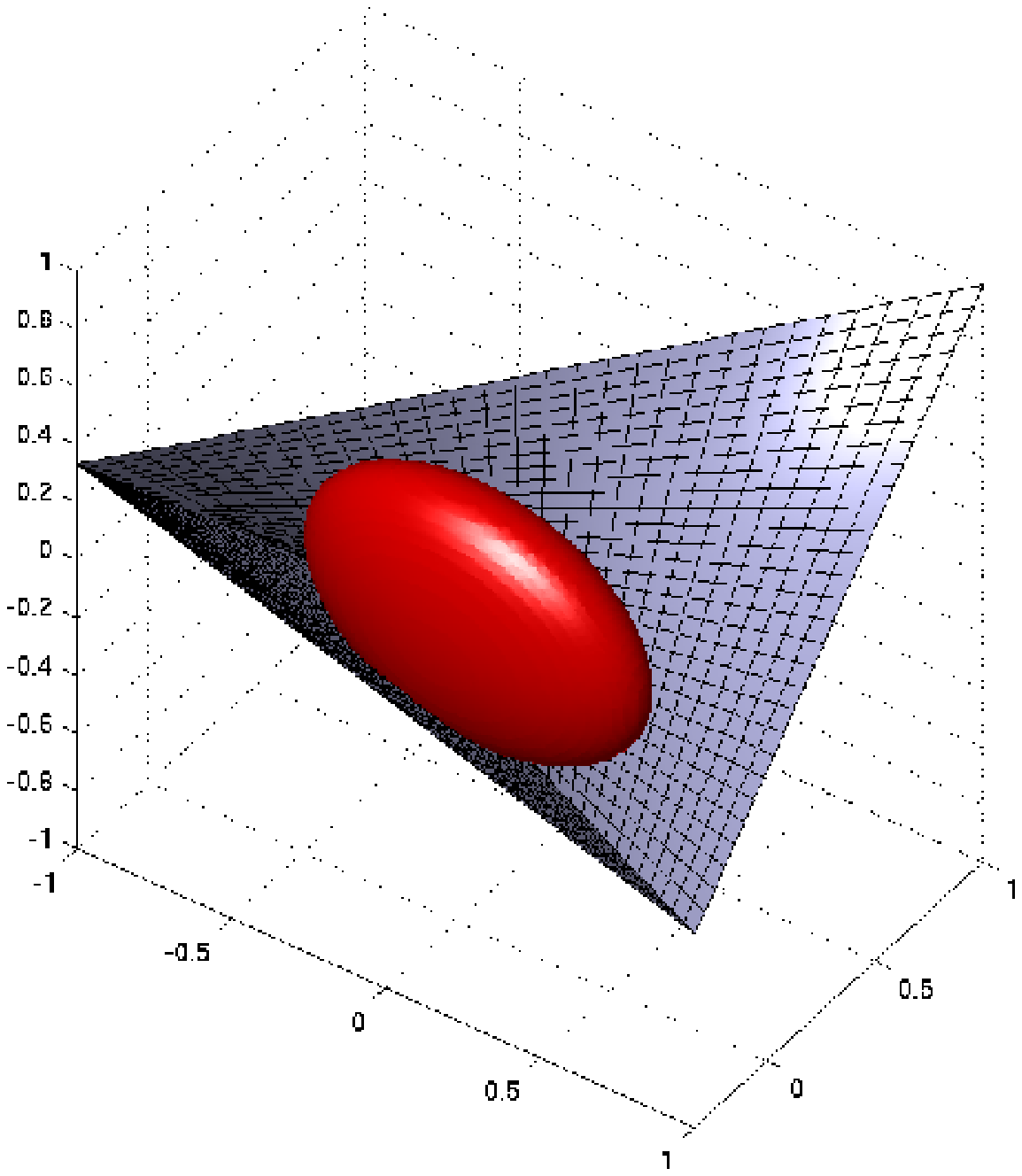}
\caption{Two views of a degree two inner approximation (red) of nonconvex third-degree
stability region (gray).\label{fighermite3d2}}
\end{figure}
\begin{figure}[h!]
\centering
\includegraphics[width=0.49\textwidth]{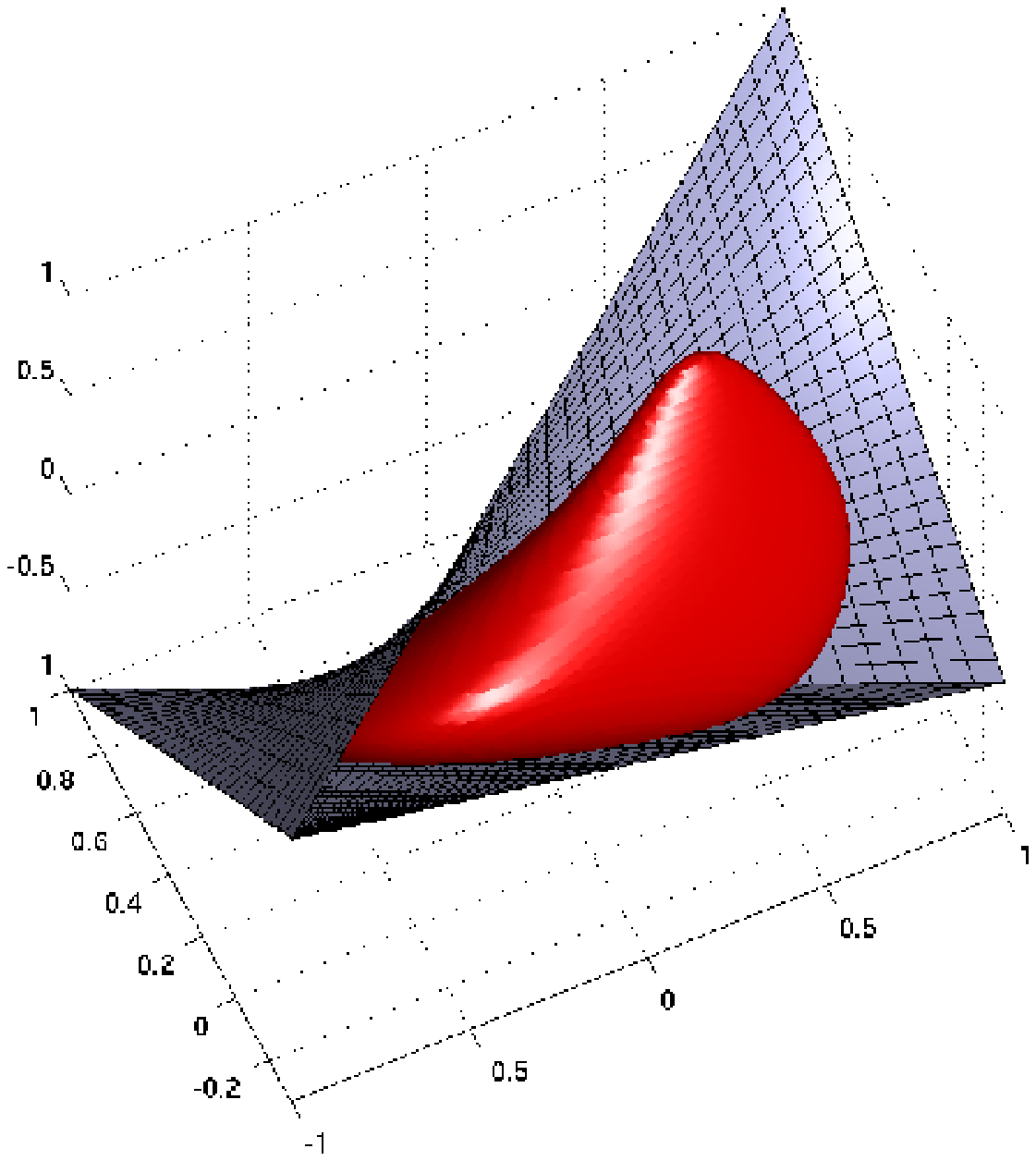}
\includegraphics[width=0.49\textwidth]{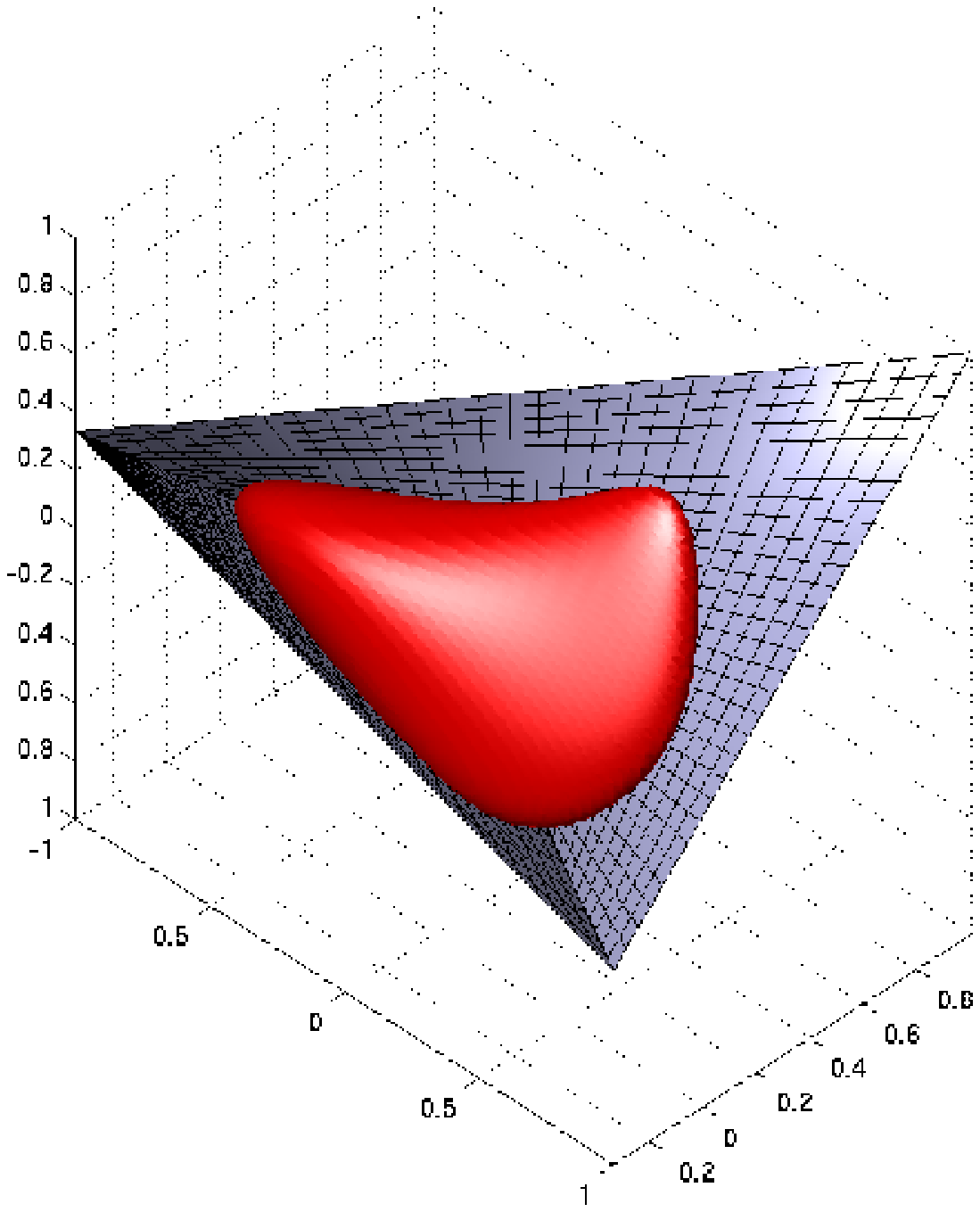}
\caption{Two views of a degree four inner approximation (red) of nonconvex third-degree
stability region (gray).\label{fighermite3d4}}
\end{figure}
\begin{figure}[h!]
\centering
\includegraphics[width=0.49\textwidth]{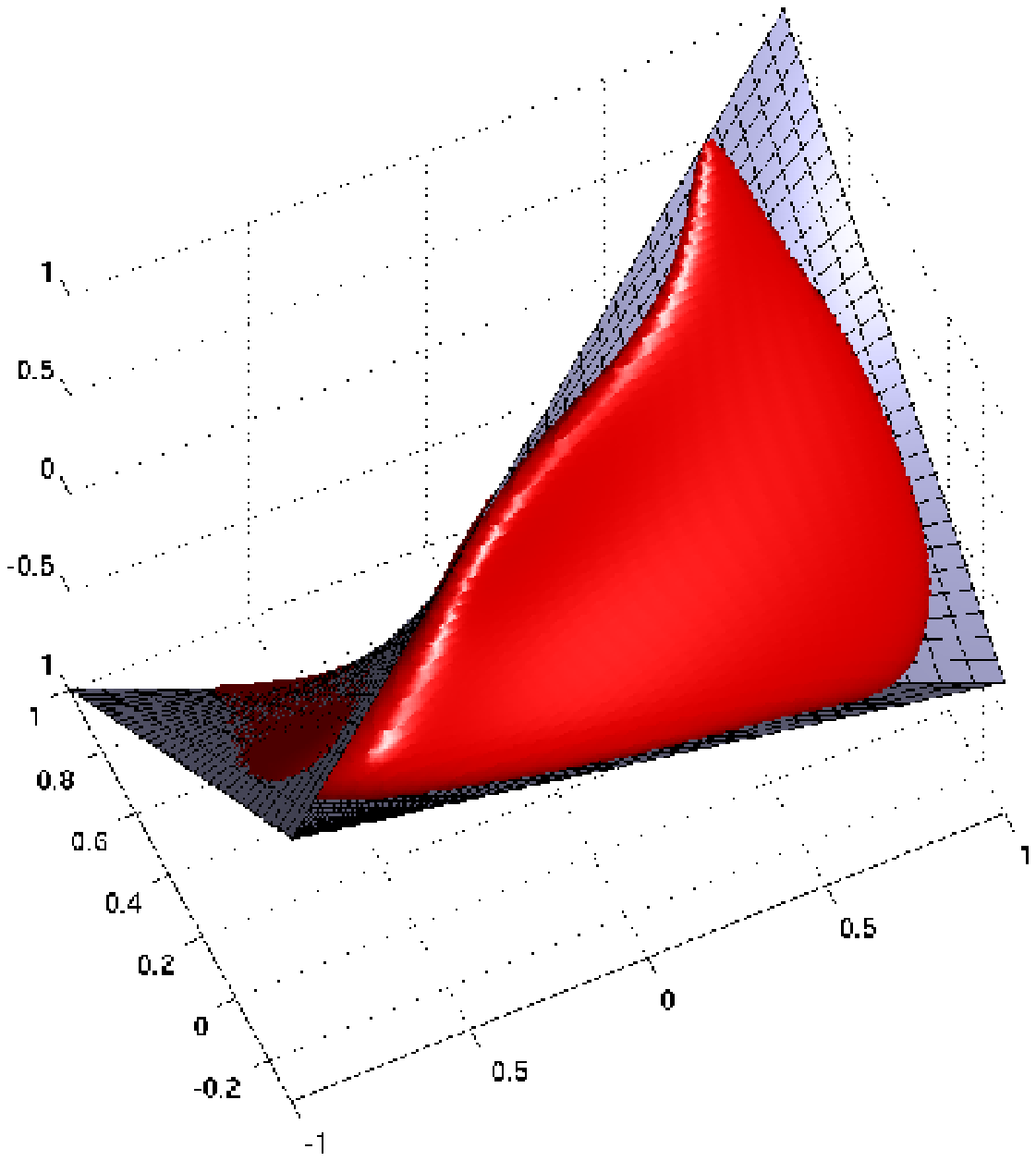}
\includegraphics[width=0.49\textwidth]{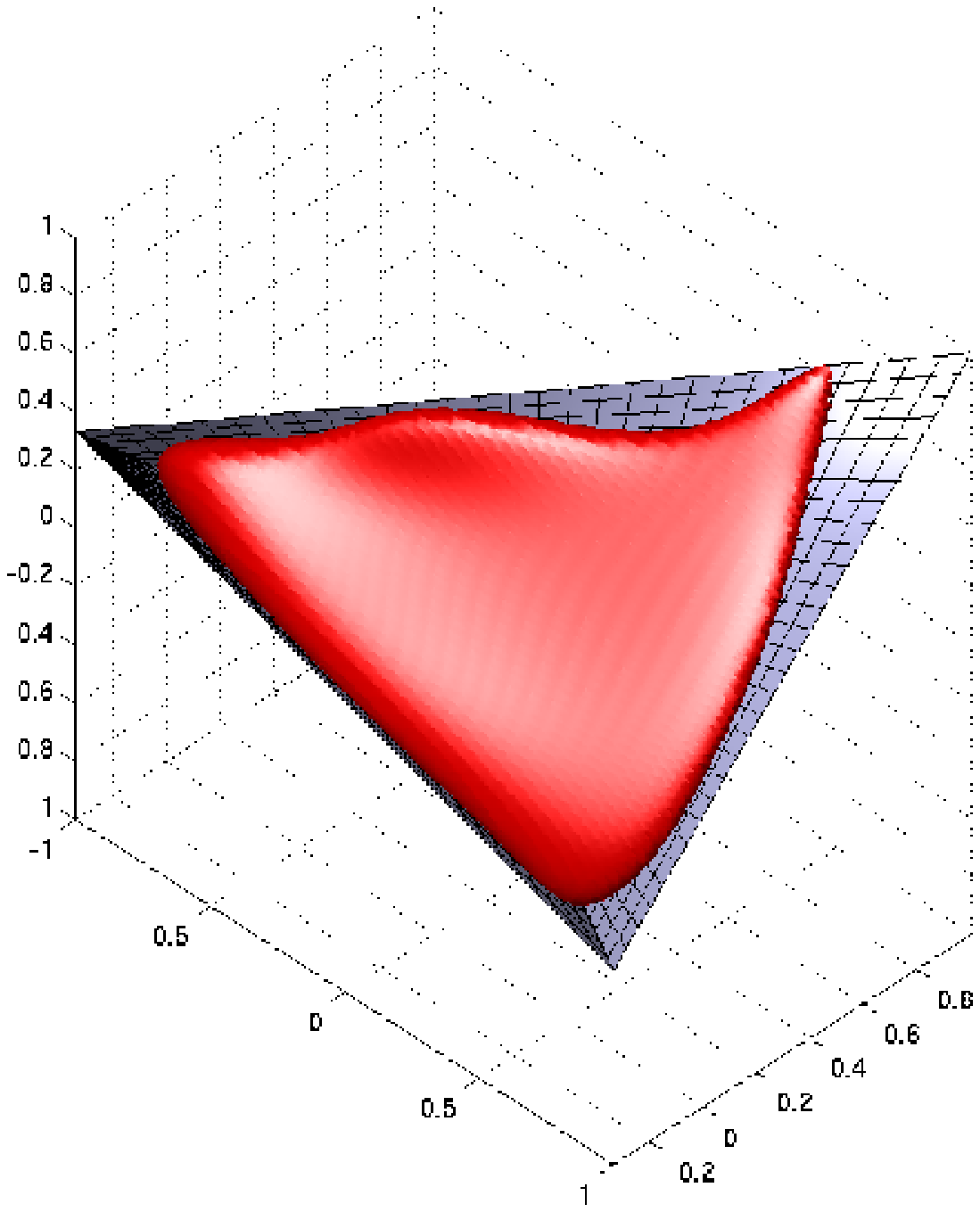}
\caption{Two views of a degree six inner approximation (red) of nonconvex third-degree
stability region (gray).\label{fighermite3d6}}
\end{figure}

Consider the problem of approximating from the inside the nonconvex
stability region $\P$ of a discrete-time third degree polynomial
$z \mapsto z^3+x_1z^2+x_2z+x_3$.
An ellipsoidal inner approximation was proposed in \cite{hpas03}.
The Hermite polynomial matrix defining $\P$ as in (\ref{setp}) is given by
\[
P(x) = \left[\begin{array}{ccc}
1 - x_3^2 & x_1 - x_2x_3 & x_2 - x_1x_3 \\
x_1 - x_2x_3 & 1+x_1^2-x_2^2-x_3^2 & x_1-x_2x_3 \\
x_2 - x_1x_3 & x_1-x_2x_3 & 1-x_3^2 
\end{array}\right].
\]
The boundary of $\P$ consists of two
triangles and a hyperbolic paraboloid. The convex hull of $\P$
is the simplex described in (\ref{conv3}). We have analytic expressions (\ref{mom3})
for the moments (\ref{momb}) of $\B=\P$.

On Figures \ref{fighermite3d2}, \ref{fighermite3d4} and \ref{fighermite3d6}
we respectively represent the degree two, four and six inner approximations
of $\P$, scaled within the unit box for visualization purposes.
We observe that the degree six approximation is very tight,
thanks to the availability of the moments of the Lebesgue measure on $\P$.

\subsection{Fixed-order controller design}\label{design}

Consider the linear discrete-time system with characteristic polynomial
$z \mapsto z^4-(2x_1+x_2)z^3+2x_1z+x_2$ depending affinely on two real
design parameters $x_1$ and $x_2$. It follows from Hermite's stability
criterion
that this polynomial has its roots in the open unit disk if and only if
\[\small
P(x) = \left[\begin{array}{@{\;}c@{\;}c@{\;}c@{\;}c@{\;}}
1-x_2^2 & -2x_1-x_2-2x_1x_2 & 0 & 2x_1+2x_1x_2+x_2^2 \\
-2x_1-x_2-2x_1x_2 & 1+4x_1x_2 & -2x_1-x_2-2x_1x_2 & 0 \\
0 & -2x_1-x_2-2x_1x_2 & 1+4x_1x_2 & -2x_1-x_2-2x_1x_2 \\
2x_1+2x_1x_2+x_2^2 & 0 & -2x_1-x_2-2x_1x_2 & 1-x_2^2
\end{array}\right]
\]
is positive definite. As recalled in (\ref{multiaffine}),
the convex hull of the four-dimensional stability domain
of a degree four polynomial is the simplex with vertices
$(-4,6,-4,1)$, $(-2,0,2,-1)$, $(0,-2,0,1)$, $(2,0,-2,-1)$,
$(4,6,4,1)$ corresponding to the five polynomials with
zeros equal to $-1$ or $+1$. Using elementary linear algebra,
we find out that the image of this simplex
through the affine mapping $(-(2x_1+x_2),0,2x_1,x_2)$
parametrized by $x \in \R^2$ is the two-dimensional simplex
\[
\B = \mathrm{conv}\{(-\frac{1}{4},1),\,(\frac{7}{8},-\frac{1}{2}),\,
(-\frac{5}{8},-\frac{1}{2})\}.
\]
The (closure of the) stability region $\P = \{x \in \R^2 \: :\: P(x) \succeq 0\}$
is therefore included in $\B$, whose moments (\ref{momb}) are
readily obtained e.g. by the explicit formulas of \cite{la01}.

\begin{figure}[h!]
\centering
\includegraphics[width=0.49\textwidth]{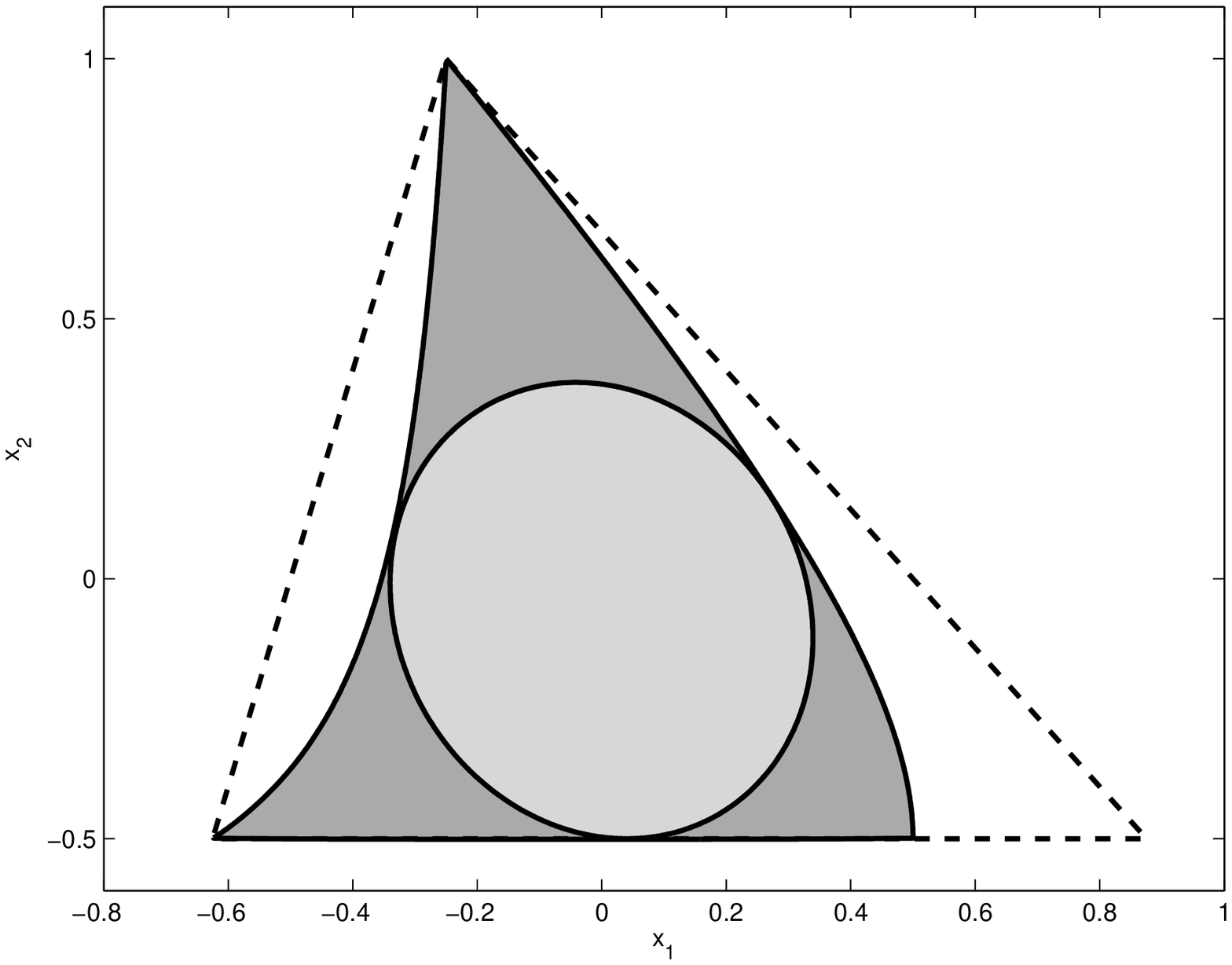}
\includegraphics[width=0.49\textwidth]{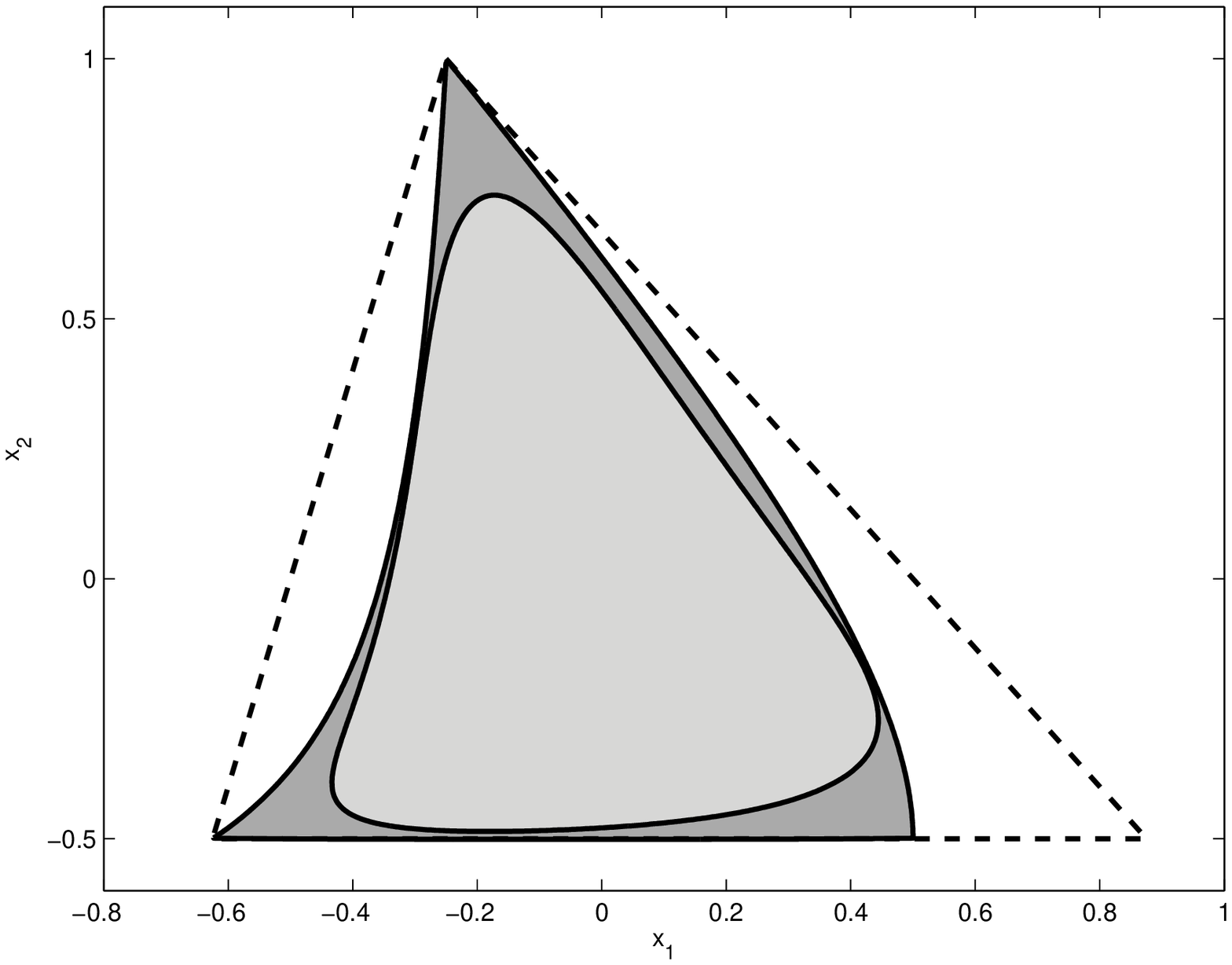}
\caption{Degree two (left) and four (right) inner approximations (light gray)
of PMI stability region (dark gray) embedded in simplex (dashed).\label{fighermite4a}}
\end{figure}
\begin{figure}[h!]
\centering
\includegraphics[width=0.49\textwidth]{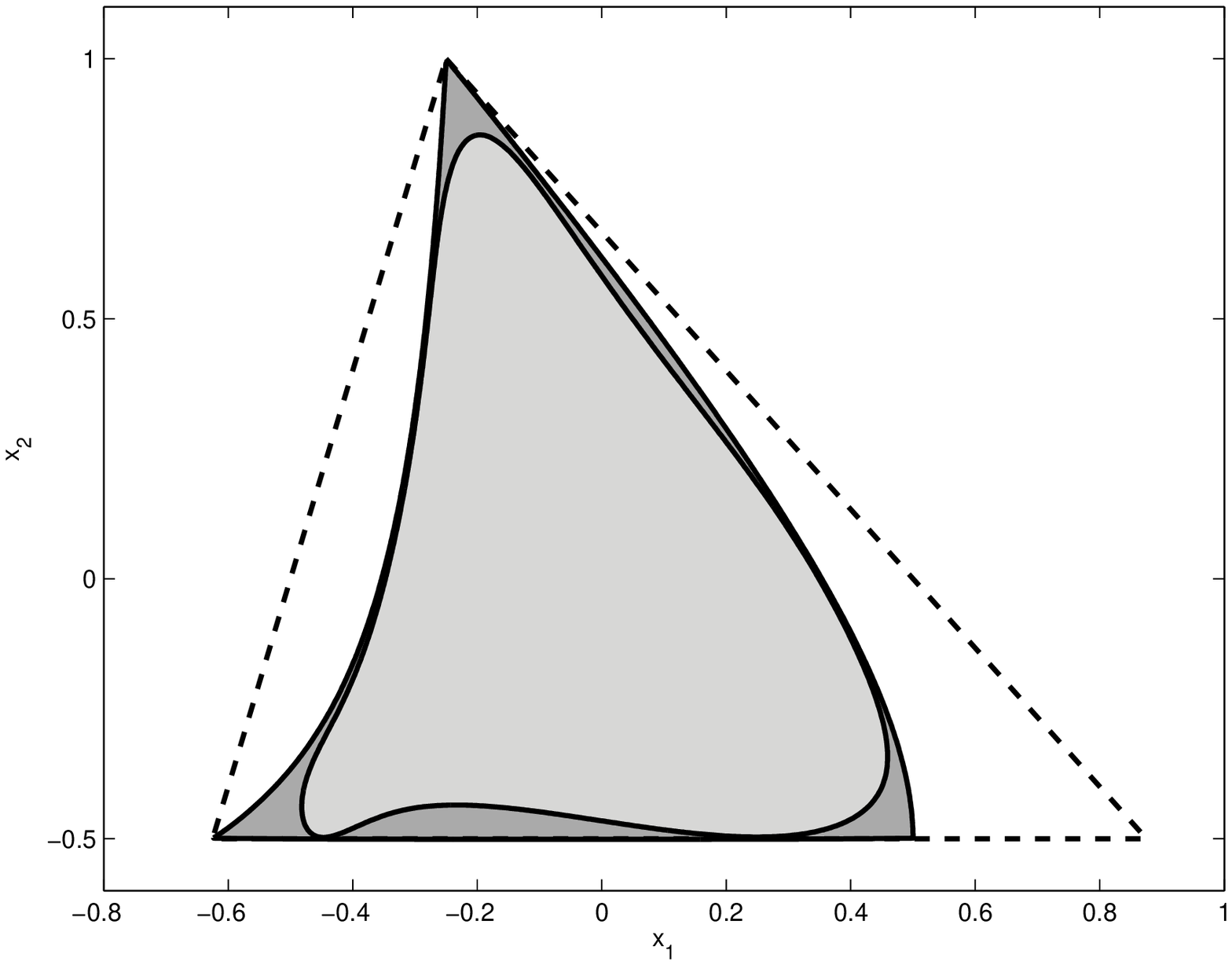}
\includegraphics[width=0.49\textwidth]{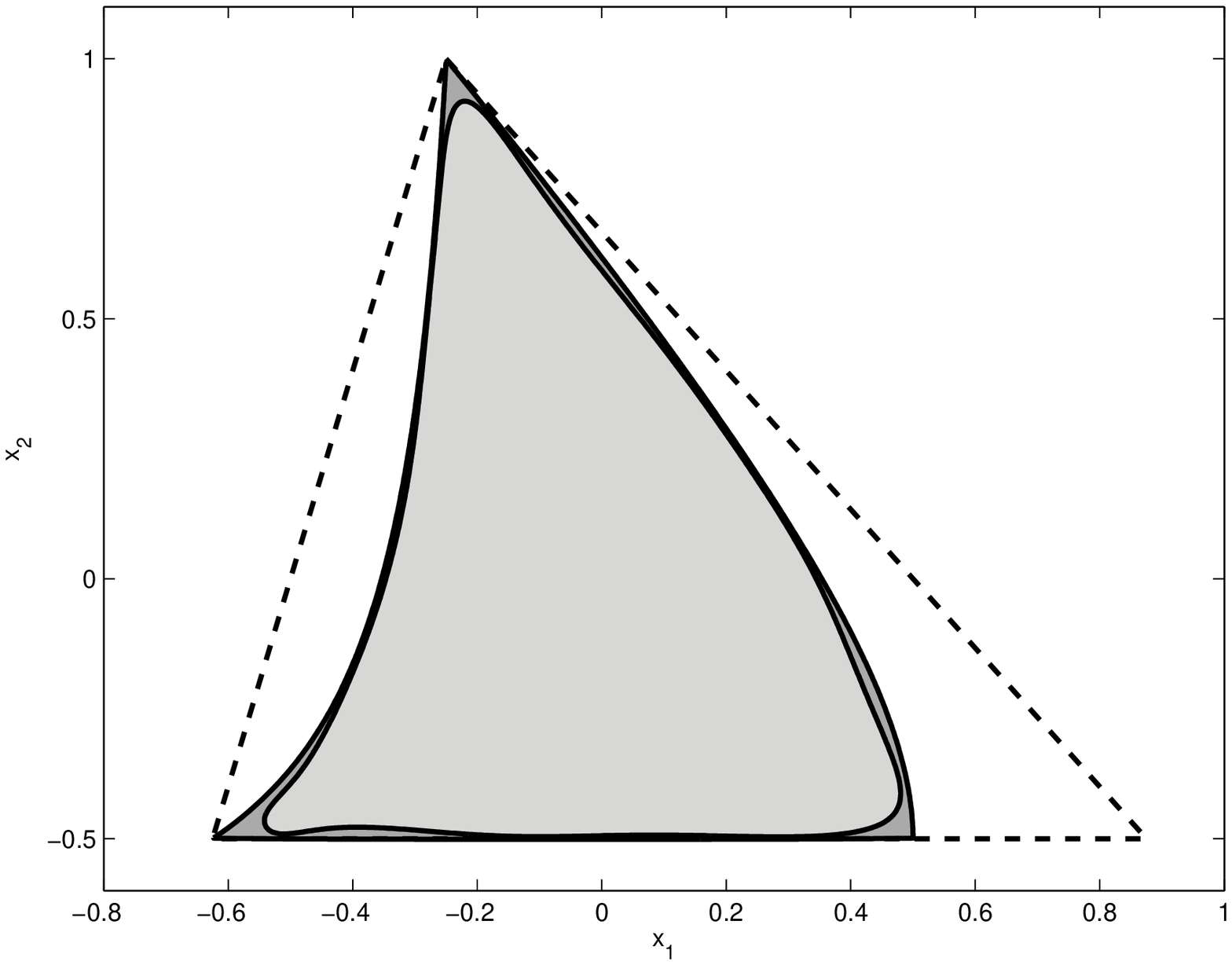}
\caption{Degree six (left) and eight (right) inner approximations (light gray)
of PMI stability region (dark gray) embedded in simplex (dashed).\label{fighermite4b}}
\end{figure}

On Figures \ref{fighermite4a} and \ref{fighermite4b} we represent the degree two, four,
six and eight inner approximations to $\P$, corresponding to stability regions
for the linear system. We observe that the approximations become
tight rather quickly. This is due to the fact that $\B$ is a good outer approximation
of $\P$ with known moments. Tighter outer approximations $\B$ would result in
tighter inner approximations of $\P$, but then the moments of $\B$ can be
hard to compute, see \cite{hls09}.

\begin{figure}[h!]
\centering
\includegraphics[width=0.49\textwidth]{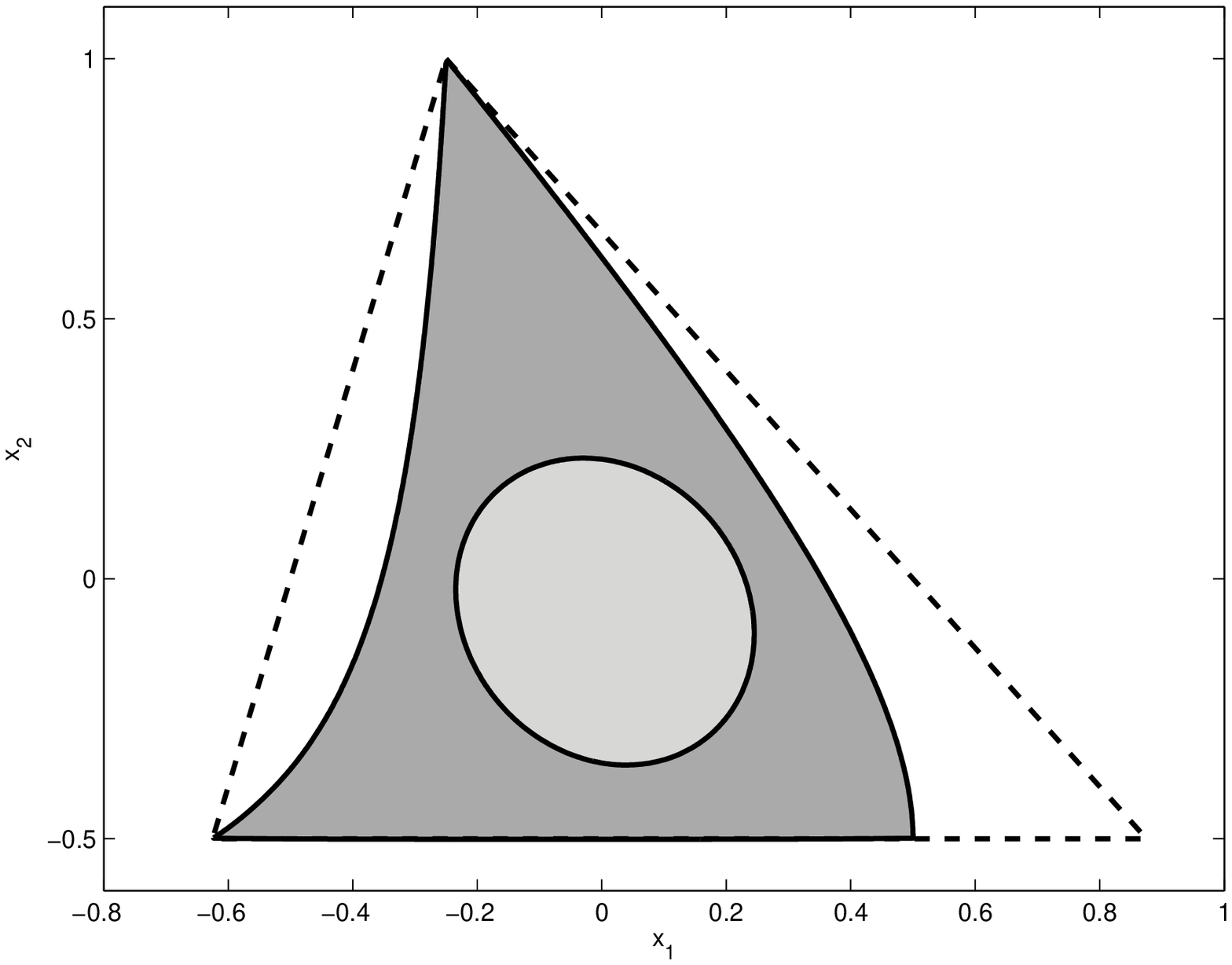}
\includegraphics[width=0.49\textwidth]{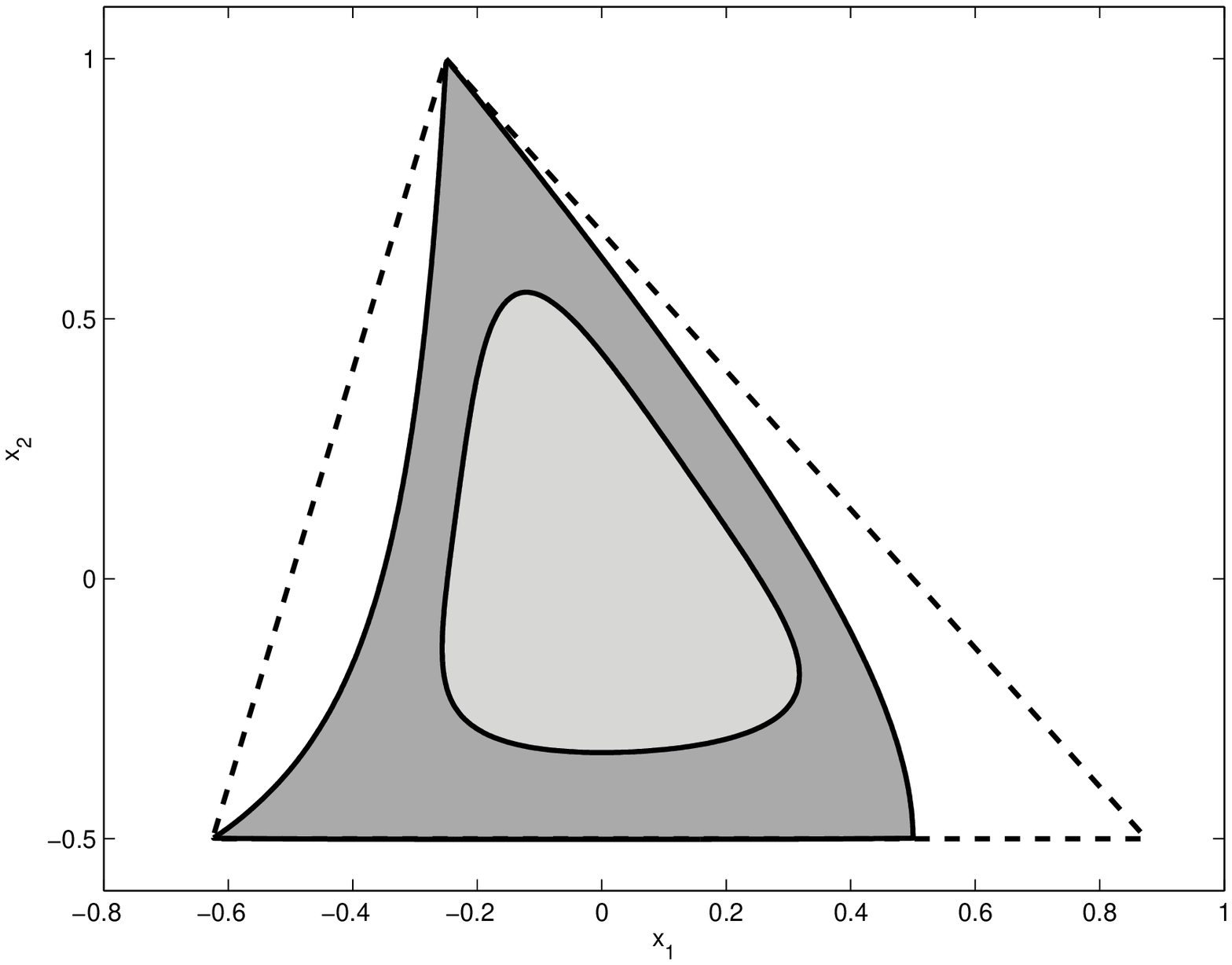}
\caption{Degree two (left) and four (right) inner approximations (light gray)
of robust PMI stability region (dark gray) embedded in simplex (dashed).\label{fighermite4robusta}}
\end{figure}

\subsection{Robust controller design}

Now consider the uncertain polynomial
$z \mapsto x_2+u+2x_1z-(2x_1+x_2)z^3+z^4$ with $u \in {\mathbf U} = \{u \in {\mathbb R} \: :\:
u^2 \leq \frac{1}{16}\}$ with uncertain Hermite matrix $P(x,u)$ 
and the corresponding parametrized PMI stability region $\P$
in (\ref{setp}). Let us use the same bounding set $\B$ 
as in \S \ref{design}.

On Figure \ref{fighermite4robusta}
we represent the degree two and degree four
inner approximations to $\mathbf P$, corresponding to robust stability regions
for the linear system. Comparing with Figure \ref{fighermite4a}
we see that the approximations are smaller, and in particular they do not touch
the stability boundary to cope with the robustness requirements.

\section{Conclusion}

We have constructed a hierarchy of inner approximations of feasible sets
defined by parametrized or uncertain polynomial matrix inequalities (PMI).
Each inner approximation is computed by solving a convex linear
matrix inequality (LMI) problem. The hierarchy converges in a well-defined
analytic sense, so that conservatism of the approximation is guaranteed
to vanish asymptotically. In addition, the inner approximations are simple polynomial
or piecewise-polynomial superlevel sets, so that optimization over these
sets is significantly simpler than optimization over the original
parametrized PMI set. In particular, we remove the possibly complicated
dependence of the problem data on the uncertain parameters.

One may also impose the hierarchy of inner approximations to be nested.
Finally, one may also impose the inner approximations to be convex. In this latter case
they do not converge any more to the feasible set but, on the other hand,  
optimization over the parametrized PMI set can be reformulated
as a convex polynomial optimization problem (of course at the price
of some conservatism). Ideally, beyond convexity, we may also
want the inner convex approximation to be semidefinite representable
(as an explicit affine projection of an affine section of the SDP cone),
and deriving such a representation may be an interesting research direction.

The tradeoff to be found is between tightness of the inner approximation
and degree of the defining polynomials. In the context of robust control
design, a satisfactory inner
approximation can be possibly computed off-line, and then used afterwards
on-line in a feedback control setup. 

Our methodology is valid for general parametrized PMI problems.
However, in the case of parametrized PMI problems coming from
fixed-order robust controller design problems, geometric insight
can be exploited to improve convergence of the hierarchy.
The key information is the knowledge of the moments of the Lebesgue
measure on a compact set which tightly contains the parametrized
PMI set we want to approximate from the inside. In turns out
that for robust control problems this knowledge is available easily,
as illustrated in the paper by several examples.

The main limitation of the approach lies in the ability of solving
primal moment and dual polynomial sum-of-squares LMI problems.
State-of-the-art general-purpose semidefinite programming solvers
can currently address problems of relatively moderate dimensions,
but problem structure and data sparsity can be exploited for
larger problems.

\section*{Acknowledgements}

The first author acknowledges support by 
project No.~103/10/0628 of the Grant Agency of the Czech Republic.
We are grateful to Luca Zaccarian for pointing out a mistake
in a previous version of this paper.

\section{Appendix}\label{technical}

\subsection{Proof of Lemma \ref{lemma1}}

\label{proof-lemma1}
\begin{proof}
The dual of polynomial SOS SDP problem (\ref{sdp}) is moment SDP problem (\ref{sdp*}).
Slater's condition cannot hold for (\ref{sdp*}) because $\V$ has empty interior in $\R^m$. However
it turns out that Slater's condition holds for an equivalent version of SDP problem 
(\ref{sdp*}), i.e., the latter has a strictly feasible solution $\hat{y}$. Indeed, let $J\subset\R[v]$ be the ideal generated by
the polynomial $v\mapsto \theta(v):=1-v^Tv$ so that the real variety $V_\R(J):=\{v\in\R^m: \theta(v)=0\}$ 
associated with $J$ is just the unit sphere $\V$.
It turns out that the real radical\footnote{$\V$ is Zariski dense in $V_{\mathbb{C}}(J)\,(=\{v\in \mathbb{C}^n:\theta(v)=0\})$ so that
$I(V_\R(J))=I(V_\mathbb{C}(J))$. But $\theta$ being irreducible, $J$ is a prime ideal and so $I(V_\mathbb{C}(J))=J$.}
of $J$ is $J$ itself, that is, $I(V_\R(J))=J$ (where for $S\subset\R^m$, $I(S)$ denotes the vanishing ideal).
And after embedding $J$ in $\R[x,u,v]$, we still have $I(V_\R(J))=J$.

Let $H:=\{(\alpha,\beta,\gamma)\in\N^n\times \N^p\times \N^m:\gamma_m\leq 1\}$, and
let $H_d:=\{(\alpha,\beta,\gamma)\in H: \sum_i\alpha_i+\sum_j\beta_j+\sum_\ell \gamma_\ell\leq d\}$.
The monomials $(x^\alpha u^\beta v^\gamma)$, $(\alpha,\beta,\gamma)\in H$, form a basis of the 
quotient space $\R[x,u,v]/J$. Moreover, for every $(\alpha,\beta,\gamma)\in \N^{n+p+m}_d$,
\[x^{\alpha} u^{\beta} v^{\gamma}\,=\,\sum_{(a,b,c)\in H_{d}}p_{abc}\,x^au^{b} v^{c}+ \underbrace{h(x,u,v)}_{\in\R[x,u,v]_{d-2}}\,(1-v^Tv),\]
for some real coefficients $(p_{abc})$, and some $h\in\R[x,u,v]_{d-2}$.
This is because every time one sees a monomial $x^a u^b v^c$ with $c_m\geq2$, one uses
$v_m^2=1-\sum_{j\neq m}v_j^2$ to reduce this monomial modulo $\theta=(1-v^Tv)$. For instance
\begin{eqnarray*}
x^au^bv_1^{c_1}\cdots v_{m-1}^{c_{m-1}}v_m^3&=&x^au^bv_1^{c_1}\cdots v_{m-1}^{c_{m-1}}v_m\times
\underbrace{v_m^2}_{=-\theta+(1-\sum_{j\neq m}v_j^2)}\\
&=&x^au^bv_1^{c_1}\cdots v_{m-1}^{c_{m-1}}v_m
-\sum_{j\neq m}x^au^bv_1^{c_1}\cdots v_j^{c_j+2}\cdots v_{m-1}^{c_{m-1}}v_m\\
&&-\underbrace{x^au^bv_1^{c_1}\cdots v_{m-1}^{c_{m-1}}v_m}_{\in\R[x,u,v]_{d-2}}\,\theta(v),
\end{eqnarray*}
etc. Therefore, for every $(\alpha,\beta,\gamma),(\alpha',\beta',\gamma')
\in H_d$,
\begin{equation}
\label{substitute}
x^{\alpha+\alpha'} u^{\beta+\beta'} v^{\gamma+\gamma'}\,=\,\sum_{(a,b,c)\in H_{2d}}
p_{abc}\, x^au^{b} v^{c}+ \underbrace{h(x,u,v)}_{\in\R[x,u,v]_{2d-2}}\,(1-v^Tv),\end{equation}
for some real coefficients $(p_{abc})$, and some $h\in\R[x,u,v]_{2d-2}$.

So because of the constraints $M_{d-1}((1-v^Tv)\,y)=0$, the semidefinite program (\ref{sdp*}) is equivalent to the semidefinite program:
\begin{equation}
\label{sdp2*}
\begin{array}{rl}
\rho^*_d=\displaystyle\int_\B\lm(x)dx\,-\,\min_y &L_y(v^TP(x,u)v)\\
\mbox{s.t.}&\hat{M}_d(y)\succeq0,\:M_{d-1}((1-v^Tv)\,y)=0\\
&\hat{M}_{d-d_{a_i}}(a_i\,y)\succeq0,\quad i=0,1,\ldots,n_a\\
&\hat{M}_{d-d_{b_j}}(b_j\,y)\succeq0,\quad j=1,\ldots,n_b\\
&L_y(x^\alpha)\,=\,\int_\B x^\alpha\,dx,\quad\forall\alpha\in\N^n_{2d}
\end{array}\end{equation}
where the smaller moment matrix $\hat{M}_d(y)$ is the submatrix of $M_d(y)$ obtained by
looking only at rows and columns indexed in the monomial basis $(x^\alpha y^\beta v^\gamma)$, 
$(\alpha,\beta,\gamma)\in H_d$, instead of $\N^{n+p+m}_d$.
Similarly, the smaller localizing matrix $\hat{M}_{d-d_{a_i}}(a_i\,y)$ is the submatrix of $M_{d-d_{a_i}}(a_i\,y)$ obtained by
looking only at rows and columns indexed in the monomial basis $(x^\alpha y^\beta v^\gamma)$, 
$(\alpha,\beta,\gamma)\in H_{d-d_{a_i}}$, instead of $\N^{n+p+m}_{d-d_{a_i}}$; and similarly for $\hat{M}_{d-d_{b_j}}(b_j\,y)$.

Indeed, in view of (\ref{substitute}) and using $M_d((1-v^Tv)\,y)=0$,
every column of $M_d(y)$ associated with $(\alpha,\beta,\gamma)\in\N^{n+p+m}$ is a linear combination
of columns associated with $(\alpha',\beta',\gamma')\in H_d$. And similary
for $M_{d-d_{a_i}}(a_i\,y)$ and $M_{d-d_{b_j}}(b_j\,y)$. Hence,
$M_d(y)\succeq0\Leftrightarrow \hat{M}_d(y)\succeq0$, and 
\[M_{d-d_{a_i}}(a_i\,y)\succeq0\Leftrightarrow \hat{M}_{d-d_{a_i}}(a_i\,y)\succeq0;\quad
M_{d-d_{b_j}}(b_j\,y)\succeq0\Leftrightarrow \hat{M}_{d-d_{b_j}}(b_j\,y)\succeq0,\]
for all $i=1,\ldots,n_a$, $j=1,\ldots,n_b$.

Next, let $\hat{y}$ be the sequence of moments of the (product) measure $\mu$ uniformly distributed on $\B\times\U\times\V$, and scaled 
so that for all $(\alpha,\beta,\gamma)\in\N^{n+p+m}_{2d}$
\[\hat{y}_{\alpha\beta\gamma}\,=\,\int_{\B\times\U\times\V}x^\alpha\,u^\beta v^\gamma\,d\mu(x,u,v)\,=\,\frac{1}{{\rm vol}\,\U\times \V}
\int_\B\int_\U\int_\V x^\alpha\,u^\beta v^\gamma\,\underbrace{dx\,du\,d\lambda(v)}_{d\mu(x,u,v)}\]
(with $\lambda$ the rotation invariant measure on $\V$).
Therefore, for every $\alpha\in\N^n_{2d}$,
\[\hat{y}_{\alpha 00}\,=\,L_y(x^\alpha)\,=\,\int_{\B\times\U\times\V}x^\alpha\,d\mu(x,u,v)\,=\,\int_\B x^\alpha dx.\]
Moreover, $M_{d-1}((1-v^Tv)\,y)=0$ for every $d$ and importantly,
$\hat{M}_d(\hat{y}) \succ 0$, $\hat{M}_{d-d_{a_i}}(a_i\,\hat{y})\succ0$ and
$\hat{M}_{d-d_{b_j}}(b_j\,\hat{y})\succ0$. To see why, suppose for instance that $h^T\hat{M}_d(\hat{y})h=0$ for some
vector $h\neq0$. This means that for some non trivial polynomial $h\in\R[x,u,v]/J$ of degree $d$,
\[h^T\hat{M}_d(\hat{y})h\,=\,\int_{\B\times\U\times\V}h^2\,d\mu\,=\,0,\]
that is, $h(x,u,v)=0$ for $\mu$-almost all $(x,u,v)\in\B\times\U\times\V$, and so
 $h(x,u,v)=0$ for all $(x,u,v)\in\B\times\U\times\V$ because $h$ is continuous.
But as $\B\times\U$ has nonempty interior in $\R^n\times\R^p$, then necessarily
$h\in I(V_\R(J))\,(=J)$ -- see Lemma \ref{lem-ideal} in section \ref{ideal} -- which contradicts $0\neq h\in\R[x,u,v]/J$.
Therefore $\hat{y}$ is a strictly feasible solution of (\ref{sdp2*}) and so Slater's condition holds for (\ref{sdp2*}).

Denote by $\hat{\Sigma}[x,u,v]_d$ the space of polynomials of degree at most $2d$,
that are SOS of polynomials in $\R[x,u,v]/J$. As $\hat{y}$ is a strictly feasible solution of the semidefinite program (\ref{sdp2*}), by a standard result of convex optimization,
there is no duality gap between  (\ref{sdp2*}) and its dual
\begin{equation}\label{sdp2}
\begin{array}{rl}
\rho'_d\,=\,\displaystyle\int_\B\lm(x)\,dx\,-\,\min_{g,r,s,t} & \displaystyle\int_\B g(x)\,dx\\[1em]
\mathrm{s.t.} & v^T P(x,u) v - g(x) \,=\, r(x,u,v) (1-v^T v) \\
& +\displaystyle\sum_{i=0}^{n_a} s_i(x,u,v) a_i(u)+\displaystyle\sum_{j=1}^{n_b} t_j(x,u,v) b_j(x) \quad\forall (x,u,v)\\
\end{array}
\end{equation}
where now the decision variables are coefficients of
polynomials $g\in\R[x]_{2d}$, $r\in\R[x,u,v]_{2d_r}$, and coefficients of
SOS polynomials $s_i\in\hat{\Sigma}[x,u,v]_{d_{a_i}}$, $i=0,1,\ldots,n_a$,
and $t_j\in\hat{\Sigma}[x,u,v]_{d_{b_j}}$, $j=1,\ldots,n_b$. 
That is, $\rho'_d=\rho_d^*$ and so $\rho_d=\rho^*_d$ because 
$\rho'_d\leq\rho_d\leq\rho^*_d$.
If $\rho'_d<\infty$ then (\ref{sdp2}) is guaranteed to have an optimal solution $(g^*,r^*,s^*,t^*)$.
But observe that such an optimal solution $(g^*,r^*,s^*,t^*)$ is also feasible in (\ref{sdp}), and so having value $\rho'_d=\rho_d=\rho^*_d$,
$(g^*,r^*,s^*,t^*)$ is also an optimal solution of (\ref{sdp}).

It remains to prove that $\rho_d$ is  bounded. For any feasible solution $y$ of (\ref{sdp}), $y_0\leq {\rm vol}\,\B$, and
\begin{equation}
\label{bounds}
L_y(x_i^{2d})\,\leq\,R^{2d}y_0^d\,;\quad L_y(u_j^{2d})\,\leq\,R^{2d}y_0^d\,;\quad L_y(v_k^{2d})\,\leq\,y_0^d,
\end{equation}
for all $i=1,\ldots,n$, $j=1,\ldots,p$, $k=1,\ldots,m$.
This follows from $M_{d-d_{a_{i^*}}}(a_{i^*}\,y)\succeq0$, $M_{d-d_{b_{j^*}}}(b_{j^*}\,y)\succeq0$ and
$M_{d-1}((1-v^Tv)\,y)=0$, where 
$a_{i^*}(x)\,=\,R^2-x^Tx$ and $b_{j^*}(x)\,=\,R^2-u^Tu$; 
see the comments after (\ref{setu}) and (\ref{momb}). Then by \cite[Lemma 4.3]{lasnetzer},
one obtains $\vert y_\alpha\vert\leq R^{2d}({\rm vol}\,\B)^d$, for all $\alpha\in\N^n_{2d}$, which shows that the feasible set of
(\ref{sdp*}) is compact. Hence (\ref{sdp*}) has an optimal solution and $\rho_d$ is finite; therefore
its dual (\ref{sdp}) also has an optimal solution, the desired result.
\end{proof}

\subsection{Proof of Theorem \ref{thmain}}\label{proof-thmain}

\begin{proof}
(a) Let $\K:=\B\times\U\times\V\subset\R^{n+p+m}$ and consider the infinite-dimensional optimization problem
\begin{equation}\label{momp}
\begin{array}{rcll}
\rho & = & \displaystyle\min_{\mu\in M(\K)} & \displaystyle\int_\K v^TP(x,u)v\, d\mu(x,u,v) \\
& & \mathrm{s.t.} & \displaystyle\int_\K x^\alpha d\mu\,=\,\int_\B x^\alpha\,dx,\quad\alpha\in\N^n
\end{array}
\end{equation}
where $M(\K)$ is the space of finite Borel measures on $\K$.
Problem (\ref{momp}) has an optimal solution $\mu^*\in M(\K)$. Indeed,  $\rho\geq\int_\B\lm(x) dx$ because for every $(x,u,v)\in\K$, $v^TP(x,u)v\geq \lm(x)$; and so
for every feasible solution $\mu\in M(\K)$,
\[\int_\K v^TP(x,u,v)v\,d\mu(x,u,v)\,\geq\,\int_\K\lm(x)\,d\mu(x,u,v)\,=\,\int_\B\lm(x)\,dx\]
because $\int_\K x^\alpha d\mu=\int_\B x^\alpha dx$ for all $\alpha\in\N$ and hence
the marginal of $\mu$ on $\R^n$ is the Lebesgue measure on $\B$. On the other hand,
observe that for every $x\in\B$, $\lm(x)=v_x^TP(x,u_x)v_x$ for some $(u_x,v_x)\in \U\times\V$.
Therefore, let $\mu^*\in M(\K)$ be the Borel measure concentrated on $(x,u_x,v_x)$ for all $x\in\B$, i.e.
\[\mu^*(\B'\times\U'\times\V')\,:=\,\int_{\B'\cap\U'}1_{\U'\times\V'}(u_x,v_x)\,dx,\qquad\forall
(\B',\U',\V')\in B(\B)\times B(\U)\times B(\V)\]
where $x\mapsto 1_\B(x)$ denotes the indicator function of set $\B$ and $B(\B)$ denotes the Borel $\sigma$-algebra
of subsets of $\B$.
Then $\mu^*$ is feasible for problem (\ref{momp}) with value
\[
\int_\K v^TP(x,u)v\,d\mu^*(x,u,v)=\int_\B\lm(x)\,dx
\]
which proves that $\rho=\int_\B\lm(x)\,dx$.

Next, $\lm$ being continuous on compact set $\B$, by the Stone-Weierstrass theorem \cite[\S A7.5]{ash}, for every $\varepsilon>0$ there exists
a polynomial $h_\varepsilon\in\R[x]$ such that
\[
\sup_{x\in \B}\vert \lm(x)-h_\varepsilon(x)\vert<\frac{\varepsilon}{2}.
\]
Hence the polynomial
$p_\varepsilon:=h_\varepsilon -\varepsilon$  satisfies $\lm-p_\varepsilon>0$ on $\B$ and so
$v^TP(x,u)v-p_\varepsilon >0$ on $\B\times\U\times\V$. By Putinar's Positivstellensatz, see e.g \cite[Section 2.5]{l09}, there exists
SOS polynomials $r_\varepsilon\in\R[x,u,v]$, and $s_{i\varepsilon},t_{j\varepsilon} \in\Sigma[x,u,v]$ such that equation (\ref{sdp}) is satisfied.
Hence for $d$ sufficiently large, say $d\geq d_\varepsilon$, $(p_\varepsilon,r_\varepsilon,s_{i\varepsilon},t_{j\varepsilon})$
is a feasible solution of (\ref{sdp}) with associated value 
\[\int_\B(\lm(x)-p_\varepsilon(x))\,dx\,\leq\,\frac{3\varepsilon}{2}\int_\B dx.\]
Hence $0\leq\rho_d\leq \frac{3\varepsilon}{2}\int_B dx$ whenever $d\geq d_\varepsilon$
where $\rho_d$ is defined in (\ref{lem1-1}). As $\varepsilon>0$ was arbitrary, we obtain the desired result
\[\lim_{d\to\infty}\rho_d=0.\]
Observe that since $g_d\leq\lm$ for all $d$,
\[
\rho_d=\int_\B(\lm(x)-g_d(x))\,dx=\int_\B\vert \lm(x)-g_d(x)\vert\,dx
\]
so that the convergence $\rho_d\to0$ is just the convergence $g_d\to\lm$ for the $L_1$ norm on $\B$. Finally
the convergence $g_d\to\lm$ in Lebesgue measure on $\B$ follows from \cite[Theorem 2.5.1]{ash}.

(b) For each $x\in\B$, fixed and arbitrary, the sequence $(\bar{g}_d)$ is monotone nondecreasing and bounded above by $\lm$.
Therefore there exists $g^*:\B\to\R$ such that for every $x\in\B$, $\bar{g}_d(x)\uparrow g^*(x)\leq \lm(x)$ as $d\to\infty$.
Since $\bar{g}_d\geq\bar{g}_0$ and $\int_\B \bar{g}_0dx>-\infty$,
by Lebesgue's Dominated Convergence Theorem \cite[\S 1.6.9]{ash}
\[\int_\B g^*(x)dx\,=\,\lim_{d\to\infty}\int_\B \bar{g}_d(x)dx\,=\,\int_\B\lm(x)dx,\]
and so from $g^*(x)\leq\lm(x)$ we deduce that $g^*(x)=\lm(x)$ for almost all $x\in\B$. 
Combining the latter with $\bar{g}_d\uparrow g^*$, we obtain that 
$\bar{g}_d\to \lm$ almost everywhere in $\B$. But then since the Lebesgue measure is finite on $\B$,
by Egorov's theorem \cite[Theorem 2.5.5]{ash}, $\bar{g}_d\to \lm$ almost uniformly in $\B$. Finally, convergence in
Lebesgue measure on $\B$ also follows from \cite[Theorem 2.5.2]{ash}.
\end{proof}

\subsection{Proof of Corollary \ref{coro1}}
\label{proof-coro1}

\begin{proof}
By Theorem \ref{thmain}, $\lim_{d\to\infty}\Vert \lm-g_d\Vert_1=0$. Therefore, by \cite[Theorem 2.5.1]{ash}
the sequence $(g_d)$ converges to $\lm$ in Lebesgue measure, i.e. for every $\varepsilon>0$,
\begin{equation}\label{aux}
\lim_{d\to\infty}
\vol\{x\,:\, \vert\lm(x)-g_d(x)\vert\,\geq\,\varepsilon\}=0.
\end{equation}
Let $\varepsilon>0$ be fixed, arbitrary, and let $\P_\varepsilon:=\{x\in\B\,:\,\lm(x)\geq\varepsilon\}$, so that
$\lim_{\varepsilon\to0}\vol\,\P_\varepsilon = \vol\,\P$. By (\ref{aux}),
$\lim_{d\to\infty}\vol(\P_\varepsilon\cap\{x\in\B\,:\,g_d(x)<0\})=0$. Next, for all $d\in\N$,
\[
\vol\,\P_\varepsilon\,=\,\vol(\P_\varepsilon\cap\{x\in\B\,:\,g_d(x)<0\})
+\vol(\P_\varepsilon\cap\{x\in\B\,:\,g_d(x)\geq0\}).\]
Therefore, taking the limit as $d\to\infty$ yields
\begin{eqnarray*}
\vol\,\P_\varepsilon&=&\underbrace{\lim_{d\to\infty}\vol(\P_\varepsilon\cap\{x\in\B\,:\,g_d(x)<0\})}_{=0\mbox{ by (\ref{aux})}}
+\lim_{d\to\infty}\vol(\P_\varepsilon\cap\underbrace{\{x\in\B\,:\,g_d(x)\geq0\}}_{=\G_d})\\
&=&\lim_{d\to\infty}\vol(\P_\varepsilon\cap\G_d)\,\leq\,\lim_{d\to\infty}\vol\,\G_d.
\end{eqnarray*}
As $\varepsilon>0$ was arbitrary and $\G_d\subset\P$, we obtain the desired result (\ref{coro1-1}).
The proof of (\ref{coro1-2}) is similar.
\end{proof}

\subsection{Proof of Corollary \ref{coro2}}
\label{proof-coro2}

\begin{proof}
Let $0<\varepsilon<\frac{1}{3}$ be fixed, arbitrary. As in the proof of Theorem  \ref{thmain},
for every $k\in\N$ there exists
a polynomial $h_k\in\R[x]$ such that $\sup_{x\in \B}\vert \lm(x)-h_k(x)\vert<\varepsilon^k$. Hence for all $x\in\B$ and all $k\geq1$,
\[\lm(x)-3\varepsilon^k< h_k(x)-2\varepsilon^k<\lm(x)-\varepsilon^k < \lm(x)-3\varepsilon^{k+1}<h_{k+1}(x)-2\varepsilon^{k+1}
<\lm(x)-\varepsilon^{k+1}\]
and so the polynomial $x\mapsto p_k(x):=h_k(x) -2\varepsilon^k$  satisfies $p_{k+1}(x) >p_{k}(x)$ and
$\lm(x)>p_k(x)$ for all $x\in\B$.  Again, by Putinar's Positivstellensatz, see e.g \cite[Section 2.5]{l09}, 
$p_k$ is feasible for (\ref{sdp}) with the additional constraint (\ref{add1}), provided that $d$ is sufficiently large, and with associated value
\[\int_\B\vert \lm(x)-p_k(x)\vert dx\,=\,\int_\B (\lm(x)-p_k(x))dx\,<3\varepsilon^k\int_\B dx\quad\to0\quad\mbox{as }k\to\infty.\]
\end{proof}

\subsection{An auxiliary result for the proof of Lemma \ref{lemma1}}
\label{ideal}

Remember that $J\subset \R[v]$ is the ideal generated by $1-v^Tv$ and the real radical $I(V_\R(J))$ of $J$ is $J$ itself.
And when $J$ is embedded in $\R[x,u,v]$ (with same name of simplicity) we also have $I(V_\R(J))=J$.
\begin{lemma}\label{lem-ideal}
If $f\in\R[x,u,v]_d$ is such that $f(x,u,v)=0$ for all $(x,u,v)\in\B\times\U\times\V$ then $f\in J$.
\end{lemma}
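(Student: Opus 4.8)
The plan is to reduce the statement to the radical-ideal identity already recorded just above the lemma, namely $I(V_\R(J))=J$ after $J$ is embedded in $\R[x,u,v]$. Since the generator $1-v^Tv$ of $J$ involves only the variables $v$, the associated real variety is the product $V_\R(J)=\R^n\times\R^p\times\V$. Hence it suffices to upgrade the hypothesis ``$f=0$ on $\B\times\U\times\V$'' to the stronger ``$f=0$ on all of $\R^n\times\R^p\times\V$''; once this is done, membership $f\in J$ is immediate from $I(V_\R(J))=J$.

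To carry out this upgrade, I would fix $v\in\V$ arbitrarily and regard $f_v:=f(\cdot,\cdot,v)$ as a polynomial in $\R[x,u]$. By the hypothesis, $f_v$ vanishes on $\B\times\U$. Now $\B\times\U$ has nonempty interior in $\R^n\times\R^p$ (exactly the fact used in the proof of Lemma~\ref{lemma1}), and a nonzero polynomial cannot vanish on a set with nonempty interior, since its zero set is a proper real algebraic variety and therefore has empty interior (equivalently, Lebesgue measure zero). Consequently $f_v\equiv0$ on all of $\R^n\times\R^p$; that is, $f(x,u,v)=0$ for every $(x,u)\in\R^{n+p}$ and this fixed $v$. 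As $v\in\V$ was arbitrary, $f$ vanishes on the entire variety $V_\R(J)=\R^n\times\R^p\times\V$, and applying $I(V_\R(J))=J$ yields $f\in J$, the desired conclusion.

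The only nontrivial ingredient is the elementary fact that a polynomial vanishing on a set of nonempty interior is the zero polynomial; the rest is bookkeeping together with the previously established identity $I(V_\R(J))=J$. I therefore expect no genuine obstacle. The one point to state carefully is the order of quantifiers: freezing a single $v$ reduces the problem to a polynomial in $(x,u)$ that dies on the full-dimensional set $\B\times\U$ and hence identically in $(x,u)$; repeating this for every $v\in\V$ then assembles precisely the vanishing of $f$ on $V_\R(J)$, which is what the radical identity consumes.
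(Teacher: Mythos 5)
Your proof is correct, but it follows a genuinely different route from the paper's. The paper freezes the \emph{other} block of variables: for each fixed $(x_0,u_0)\in\B\times\U$ it applies the radical identity $I(V_\R(J))=J$ only at the level of $\R[v]$ to get $f(x_0,u_0,\cdot)\in J\subset\R[v]$, and then, via an explicit monomial reduction modulo $1-v^Tv$, shows that the resulting quotient depends \emph{polynomially} on $(x,u)$, so that $f=(v^Tv-1)h$ with $h\in\R[x,u,v]$; the nonempty interior of $\B\times\U$ is invoked only at the very end to promote this identity from $\B\times\U\times\R^m$ to all of $\R^{n+p+m}$. You instead freeze $v\in\V$, use the elementary fact that a polynomial in $(x,u)$ vanishing on the full-dimensional set $\B\times\U$ vanishes identically, conclude that $f$ dies on the whole variety $V_\R(J)=\R^n\times\R^p\times\V$, and then consume the \emph{embedded} identity $I(V_\R(J))=J$ in $\R[x,u,v]$ in one stroke. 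Your argument is shorter and avoids the bookkeeping of the division step, but it places the full weight on the embedded radical identity, whose justification in the paper is only a footnote (Zariski density of the real variety in the complex one, plus irreducibility of $1-v^Tv$ making $J$ prime); the paper's construction, by contrast, needs that identity only in $\R[v]$ and manufactures the membership certificate $h$ by hand. Since the embedded identity is recorded by the paper with an independent (algebraic) justification, there is no circularity in your use of it, and both proofs share the same implicit standing hypothesis that $\B\times\U$ has nonempty interior in $\R^n\times\R^p$.
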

\begin{proof}
Write
\[f(x,u,v)\,=\,\sum_{\alpha\in\N^m_d}g_\alpha(x,u)\,v^\alpha,\]
for some polynomials $(g_\alpha)\subset\R[x,u]_d$, $\alpha\in\N^m_d$. Next, let $(x_0,u_0)\in\B\times\U$ be fixed, so that
$v\mapsto f(x_0,u_0,v)=0$ for all $v\in\V$. Therefore, as a polynomial of $\R[v]$, it vanishes on 
$\V=V_\R(J)$ and as $I(V_\R(J))=J$,  $v\mapsto f(x_0,u_0,v)\in J$, that is,
\begin{equation}
\label{reduction}
f(x_0,u_0,v)\,=\,\sum_{\alpha\in\N^m_d}g_\alpha(x_0,u_0)\,v^\alpha\,=\,(1-v^Tv)\,\theta^{x_0,v_0}(v),\end{equation}
for some polynomial $v\mapsto\theta^{x_0,v_0}(v)\in\R[v]_{d}$. 
The coefficients ($\theta_\alpha^{x_0,u_0})$ of the polynomial $\theta^{x_0,u_0}(v)=\sum_\alpha\theta_\alpha^{x_0,u_0}v^\alpha$ are linear in the coefficients $(g_\beta(x_0,u_0))$, $\beta\in\N^m_d$, of $f$.  Indeed one may reduce each monomial $v^\alpha$ using
$v_m^{2}=1-\sum_{i\neq m}v_i^2$, until there is no monomial $v_m^\beta$ with $\beta>1$. For instance,
\[v_1^{\alpha_1}\cdots v_{m-1}^{\alpha_{m-1}}v_m^{2}=v_1^{\alpha_1}\cdots v_{m-1}^{\alpha_{m-1}}(v^Tv-1)+v_1^{\alpha_1}\cdots v_{m-1}^{\alpha_{m-1}}-\sum_{j\neq m}v_1^{\alpha_1}\cdots v_j^{\alpha_j+2}\cdots v_{m-1}^{\alpha_{m-1}},\]
and
\[v_1^{\alpha_1}\cdots v_{m-1}^{\alpha_{m-1}}v_m^{3}=v_1^{\alpha_1}\cdots v_{m-1}^{\alpha_{m-1}}v_m(v^Tv-1)+v_1^{\alpha_1}\cdots v_{m-1}^{\alpha_{m-1}}v_m-\sum_{j\neq m}v_1^{\alpha_1}\cdots v_j^{\alpha_j+2}\cdots v_{m-1}^{\alpha_{m-1}}v_m,\]
etc., to finally obtain 
\[v^\alpha=p_\alpha(v)(1-v^Tv)+r_\alpha,\qquad \forall \alpha\in \N^m_d,\]
for some $p_\alpha\in\R[v]_{\vert\alpha\vert-2}$ and $r_\alpha\in \R[v]/J$. 
Therefore, summing up over all $\alpha\in\N^m_d$ yields:
\begin{eqnarray}
\nonumber
f(x_0,u_0,v)&=&\sum_{\alpha\in\N^m_d}g_\alpha(x_0,u_0)\,v^\alpha\\
\nonumber
&=&(v^Tv-1)\underbrace{\sum_{\alpha\in\N^m_d}g_\alpha(x_0,u_0)\,p_\alpha (v)}_{h(x_0,u_0,v)}
+\underbrace{\sum_{\alpha\in\N^m_d}g_\alpha(x_0,u_0)\,r_\alpha(v)}_{\mbox{$=0$ as $v\mapsto f(x_0,u_0,v)\in J$}}\\
\label{reduction3}
&=&(v^Tv-1)\,h(x_0,u_0,v),\quad\forall v\in\R^m,
\end{eqnarray}
for some $h\in\R[x,u,v]$. But since (\ref{reduction3}) holds for every $(x,u)\in\B\times\U$, we obtain
\[f(x,u,v)\,=\,(v^Tv-1)\,h(x_0,u_0,v),\quad\forall (x,u,v)\in\B\times\U\times\R^m,\]
and as $\B\times\U\times\R^m$ has nonempty interior, 
\[f(x,u,v)\,=\,(v^Tv-1)\,h(x_0,u_0,v),\quad\forall (x,u,v)\in\R^n\times\R^p\times\R^m,\]
i.e., $f=(v^Tv-1)h$, which proves the desired result that  $f\in J$.
\end{proof}

\end{document}